\newcommand\tr{\mathrm{tr}}
\newcommand\sgn{\operatorname{sgn}}
\newcommand\Eta{\mathrm{H}}
\begin{document}

\title{Intersection Graphs of Oriented Hypergraphs and Their Matrices}
\author{Nathan Reff\thanks{Department of Mathematics, The College at Brockport{:} State University of New York,
Brockport, NY 14420, USA (\href{mailto:nreff@brockport.edu}{nreff@brockport.edu}).}}


\date{\today}
\maketitle
\begin{abstract}
For a given hypergraph, an orientation can be assigned to the vertex-edge incidences.  This orientation is used to define the adjacency and Laplacian matrices.  In addition to studying these matrices, several related structures are investigated including the incidence dual, the intersection graph (line graph), and the 2-section.  A connection is then made between oriented hypergraphs and balanced incomplete block designs.
\end{abstract}

\begin{keywords} 
Oriented hypergraph, intersection graph, line graph, hypergraph adjacency matrix, hypergraph Laplacian, signed graph, signed hypergraph, balanced incomplete block designs.
\end{keywords}

\begin{AMS}
05C50, 05C65, 05C22
\end{AMS}

\section{Introduction}

An {\it oriented hypergraph} is a hypergraph where each vertex-edge incidence is given a label of $+1$ or $-1$.  Shi called this type of hypergraph a {\it signed hypergraph} and used it to model the constrained via minimization (CVM) problem or two-layer routings \cite{Shi1992533,MR1666019}.  Oriented hypergraphs were independently developed to generalize oriented signed graphs \cite{MR1120422} and related matroid properties \cite{ReffRusnak1,MR3118956}.  A generalization of directed graphs, known as {\it directed hypergraphs}, also have this type of vertex-edge labeling (see for example \cite{MR1217096}, and the references therein).  What distinguishes oriented hypergraphs from these other related incidence structures is the notion of an {\it adjacency signature} that naturally allows the adjacency and Laplacian matrices to be defined and studied \cite{ReffRusnak1,MR3218780,Chen2015442}.  This is an alternative approach to studying matrices and hypermatrices associated to hypergraphs \cite{MR1235565,MR1405722,MR1325271,MR2842309,MR2900714,MR3270186,nikiforov}, that does not require a uniformity condition on the edge sizes and allows reasonably quick spectral calculations.  Rodr\'{i}guez also developed a version of the adjacency and Laplacian matrices for hypergraphs without a uniformity requirement on edge sizes \cite{MR1890984}.  The definition of adjacency signature and the derived matrices could be applied to directed hypergraphs and their many applications.

This paper is a continuation of the investigation of matrices and eigenvalues associated to oriented hypergraphs and their related structures.  Specifically, a variety of intersection graphs of oriented hypergraphs are defined and algebraic relationships are found. 

In Section \ref{BackgroundSection} relevant background is provided.  In Section \ref{IntersectionGraphsSection} new oriented hypergraphs are defined, including the intersection graph (or line graph) of an oriented hypergraph.  Some results on oriented hypergraphs that have particular signed graphs as their intersection graphs are shown.  Sections \ref{MatricesofIGSection} and \ref{SwitchingSec} develop  matrix and other algebraic relationships between an oriented hypergraph and its dual and intersection graphs.  These matrix identities are then used to study the eigenvalues associated to the adjacency and Laplacian matrices of the same incidence structures.  In Section \ref{BIBDSec} a connection between oriented hypergraphs, balanced incomplete block designs (BIBD) and their incidence matrices is given.

In \cite{LineDiHyp} the line graph of a directed hypergraph (called a line dihypergraph) is studied.  This construction extends the notion of the line digraph \cite{MR0130839} to the hypergraphic setting in order to study connectivity problems with applications to bus networks.  The definition of the line graph of an oriented hypergraph that we will introduce in this paper is quite different than the line dihypergraph.  However, it would be an interesting future project to see connections between these constructions.

The reader may be interested in two other related investigations.  A generalization of the line digraph, called the partial line digraph, is defined in \cite{MR1178183}.  This too was generalized to directed hypergraphs \cite{MR1889911,MR1881264} with connectivity and expandability results.  Acharya studied signed intersection graphs \cite{MR2791608}, where an alternative hypergraphic version of signed graphs is introduced.

\section{Background}\label{BackgroundSection}

\subsection{Oriented Hypergraphs}
Let $\Omega$ be a finite set of indexes.  A \emph{hypergraph} is a triple $H=(V,E,\mathcal{I})$, where $V$ is a set, $E=(e_{\iota})_{\iota\in \Omega}$ is a family of subsets of $V$, and $\mathcal{I}$ is a multisubset of $V\times E$ such that if $(v,e_{\iota})\in\mathcal{I}$, then $v\in e_{\iota}$.  Note that an edge may be empty.  The set $V$ is called the \emph{set of vertices}, and $E$ is called the \emph{family of edges}.  We may also write $V(H)$, $E(H)$ and $\mathcal{I}(H)$ if necessary.  Let $n:=|V|$ and $m:=|E|$.  If $(v,e)\in \mathcal{I}$, then $v$ and $e$ are \emph{incident}.  An \emph{incidence} is a pair $(v,e)$, where $v$ and $e$ are incident.  If $(v_i,e)$ and $(v_j,e)$ both belong to $\mathcal{I}$, then $v_i$ and $v_j$ are \emph{adjacent} vertices via the edge $e$.     

A hypergraph is \emph{simple} if for every edge $e$, and for every vertex $v\in e$, $v$ and $e$ are incident exactly once.  Unless otherwise stated, all hypergraphs in this paper are assumed to be simple.  A hypergraph is \emph{linear} if for every pair $e,f\in E$, $|e\cap f|\leq 1$.

The \emph{degree} of a vertex $v_i$, denoted by $d_i=\deg(v_i)$, is equal to the number of incidences containing $v_i$.  The \emph{maximum degree} is $\Delta(H)=\Delta:=\max_i d_i$.  The \emph{size} of an edge $e$ is the number of incidences containing $e$.   A $k$\emph{-edge} is an edge of size $k$.  A \emph{$k$-uniform hypergraph} is a hypergraph such that all of its edges have size $k$.  The {\it rank} of $H$, denoted by $r(H)$, is the maximum edge size in $H$.

The \emph{incidence dual} (or \emph{dual}) of a hypergraph $H=(V,E,\mathcal{I})$, denoted by $H^*$, is the hypergraph $(E,V,\mathcal{I}^*)$, where $\mathcal{I}^*:=\{(e,v):(v,e)\in\mathcal{I}\}$.  Thus, the incidence dual reverses the roles of the vertices and edges in a hypergraph.  

The set of size 2 subsets of a set $S$ is denoted by $\binom{S}{2}$.  The set of \emph{adjacencies} $\mathcal{A}$ of $H$ is defined as $\mathcal{A}:=\{(e,\{v_i,v_j\})\in E\times \binom{V}{2}: (v_i,e)\in \mathcal{I}\text{ and }(v_j,e)\in \mathcal{I}\}$.  Observe that if $\{v_i,v_j\}\in \binom{V}{2}$, then the vertices $v_i$ and $v_j$ must be distinct.  The set of \emph{coadjacencies} $\mathcal{A}^*$ of $H$ is defined as $\mathcal{A}^*:=
\mathcal{A}(H^*)$.

An \emph{oriented hypergraph} is a pair $G=(H,\sigma)$ consisting of an \emph{underlying hypergraph} $H=(V,E,\mathcal{I})$, and an \emph{incidence orientation} $\sigma \colon\mathcal{I}\rightarrow\{+1,-1\}$.  Every oriented hypergraph has an associated \emph{adjacency signature} $\sgn\colon\mathcal{A}\rightarrow \{+1,-1\}$ defined by
\begin{equation}
\sgn(e,\{v_i,v_j\})=-\sigma (v_i,e)\sigma (v_j,e).
\end{equation}
Thus, $\sgn(e,\{v_i,v_j\})$ is called the \emph{sign} of the adjacency $(e,\{v_i,v_j\})$.  Instead of writing $\sgn(e,\{v_i,v_j\})$, the alternative notation $\sgn_{e}(v_i,v_j)$ will be used.  The notation $\sigma_G$ may also be used for the incidence orientation when necessary.  See Figure \ref{OHEx} for an example of an oriented hypergraph.

\begin{figure}[!ht]
    \includegraphics[scale=0.7]{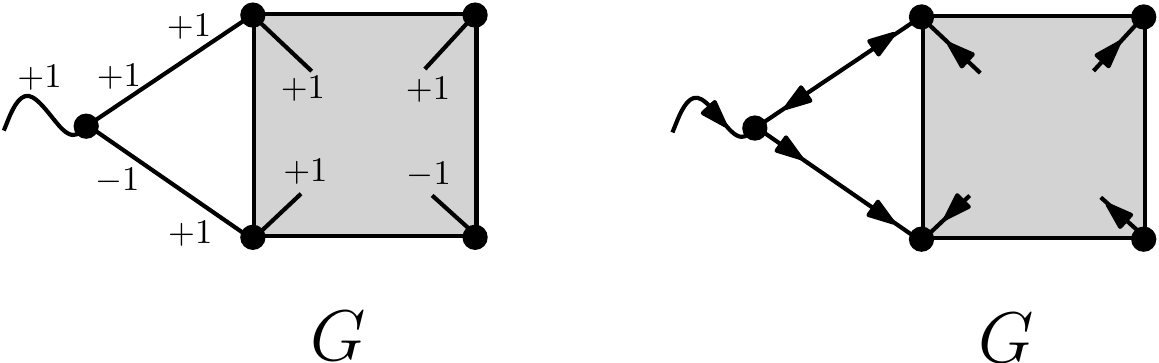}\centering
    \caption{A simple oriented hypergraph $G$ drawn in two ways.  On the left, the incidences are labeled with $\sigma$ values.  On the right, the $\sigma$ values assigned to the incidences are drawn using the arrow convention of $+1$ as an arrow going into a vertex and $-1$ as an arrow departing a vertex.}\label{OHEx}
\end{figure}

As with hypergraphs, an oriented hypergraph has an incidence dual.  The \emph{incidence dual} of an oriented hypergraph $G=(H,\sigma)$ is the oriented hypergraph $G^*=(H^*,\sigma^*)$, where the \emph{coincidence orientation} $\sigma ^{\ast }\colon\mathcal{I}^{\ast }\rightarrow \{+1,-1\}$ is defined by $\sigma ^{\ast }(e,v)\allowbreak=\sigma
(v,e)$, and the \emph{coadjacency signature} $\sgn^*\colon\mathcal{A}^*\rightarrow \{+1,-1\}$ is defined by
\begin{equation*}
\sgn^*(v,\{e_i,e_j\})=-\sigma^*(e_i,v)\sigma^*(e_j,v)=-\sigma (v,e_i)\sigma (v,e_j).
\end{equation*}
The notation $\sigma_{G^*}$ may also be used for the coincidence orientation.  See Figure \ref{OHDUAL} for an example of the dual.

\begin{figure}[!ht]
    \includegraphics[scale=0.7]{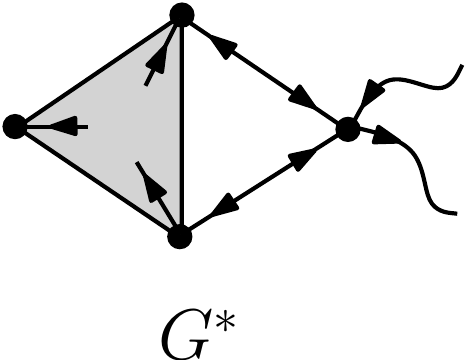}\centering
    \caption{The incidence dual $G^*$ of $G$ from Figure \ref{OHEx}.}\label{OHDUAL}
\end{figure}

\subsection{Oriented Signed Graphs}

A {\it signed graph} is a graph where edges are given labels of either $+1$ or $-1$.  Formally, a signed graph $\Sigma=(\Gamma,\sgn)$ is a graph $\Gamma$ together with a {\it sign function} (or {\it signature}) $\sgn\colon E(\Gamma)\rightarrow \{+1,-1\}$.  An {\it oriented signed graph} $(\Sigma,\beta)$ is a signed graph together with an {\it orientation} $\beta\colon \mathcal{I}(\Gamma)\rightarrow \{+1,-1\}$ that is consistent with the sign function via the relation 
\begin{equation}\label{OSGrelation}
\sgn(e_{ij})=-\beta(v_i,e_{ij})\beta(v_j,e_{ij}).
\end{equation}
Oriented signed graphs were developed \cite{MR1120422} to generalize Greene's bijection between acyclic orientations and regions of an associated hyperplane arrangement \cite{zbMATH03604927}.  This generalization is intimately connected with the theory of oriented matroids \cite{MR1744046}.

Notice that Equation \ref{OSGrelation} is the same formula used to calculate the adjacency signs in an oriented hypergraph.  This is merely because oriented signed graphs are the model for the generalization of oriented hypergraphs.  It should therefore be no surprise that a 2-uniform oriented hypergraph is an oriented signed graph.  If we say that $G$ is a simple oriented signed graph, then $G$ has no loops or multiple edges.  Put another way, $G$ is a 2-uniform linear oriented hypergraph.

Another minor technical note: in order to create an oriented signed graph $(\Sigma,\beta)$ for a given signed graph $\Sigma$, many different orientations $\beta$ can be chosen so that Equation \ref{OSGrelation} is satisfied .  This freedom of choice does not usually cause any issues in the calculations.  There may, however, be specific situations when particular orientations are more favorable and this will be noted.

\subsection{Matrices and Oriented Hypergraphs}

Let $G=(H,\sigma)$ be an oriented hypergraph.  The \emph{adjacency matrix} $A(G)=(a_{ij})\in \mathbb{R}^{n\times n}$ is defined by
\begin{equation*}
a_{ij}=
\begin{cases}
\displaystyle\sum_{e\in E}\sgn_{e}(v_{i},v_{j}) &\text{if $v_i$ is adjacent to $v_j$},\\
0 &\text{otherwise.}
\end{cases}
\end{equation*}
If $v_i$ is adjacent to $v_j$, then
\begin{align*}
a_{ij}=\sum_{e\in E}\sgn_e(v_i,v_j)=\sum_{e\in E}\sgn_e(v_j,v_i)=a_{ji}.
\end{align*}
Therefore, $A(G)$ is symmetric.

The \emph{incidence matrix} $\mathrm{H}(G)=(\eta _{ij})$ is the $n\times m$ matrix, with entries in $\{\pm 1,0\}$, defined by 
\begin{equation*}
\eta _{ij}=
\begin{cases} \sigma(v_{i},e_{j}) & \text{if }(v_{i},e_{j})\in \mathcal{I},\\
0 &\text{otherwise.}
\end{cases}
\end{equation*}
As with hypergraphs, the incidence matrix provides a convenient relationship between an oriented hypergraph and its incidence dual.

\begin{lemma}[\cite{ReffRusnak1},Theorem 4.1]\label{OHIncidenceMatrixDualTranspose}
If $G$ is an oriented hypergraph, then $\mathrm{H}(G)^{\text{T}}=\mathrm{H}(G^{\ast })$.
\end{lemma}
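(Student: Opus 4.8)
The statement to prove is Lemma~\ref{OHIncidenceMatrixDualTranspose}: if $G$ is an oriented hypergraph, then $\mathrm{H}(G)^{\mathrm{T}} = \mathrm{H}(G^*)$.

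The plan is to unwind the definitions on both sides and check that the matrices agree entry by entry. First I would fix notation: write $G = (H,\sigma)$ with $H = (V,E,\mathcal{I})$, so that $\mathrm{H}(G)$ is the $n \times m$ matrix with $(i,j)$ entry $\eta_{ij} = \sigma(v_i,e_j)$ when $(v_i,e_j) \in \mathcal{I}$ and $0$ otherwise. Then $\mathrm{H}(G)^{\mathrm{T}}$ is the $m \times n$ matrix whose $(j,i)$ entry is exactly $\eta_{ij}$. On the other side, $G^* = (H^*, \sigma^*)$ where $H^* = (E, V, \mathcal{I}^*)$ with $\mathcal{I}^* = \{(e,v) : (v,e) \in \mathcal{I}\}$, and the coincidence orientation is defined by $\sigma^*(e,v) = \sigma(v,e)$. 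So $\mathrm{H}(G^*)$ is the $m \times n$ matrix whose $(j,i)$ entry is $\sigma^*(e_j, v_i)$ when $(e_j, v_i) \in \mathcal{I}^*$ and $0$ otherwise.

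The key step is then simply to observe the correspondence $(v_i, e_j) \in \mathcal{I} \iff (e_j, v_i) \in \mathcal{I}^*$, which is immediate from the definition of $\mathcal{I}^*$, and that in the incident case $\sigma^*(e_j, v_i) = \sigma(v_i, e_j) = \eta_{ij}$ by definition of $\sigma^*$, while in the non-incident case both entries are $0$. Hence the $(j,i)$ entry of $\mathrm{H}(G^*)$ equals the $(j,i)$ entry of $\mathrm{H}(G)^{\mathrm{T}}$ for all $j \in \{1,\dots,m\}$ and $i \in \{1,\dots,n\}$, and the two matrices are equal. I would also note at the outset that the two matrices have the same shape ($m \times n$), so the entrywise comparison is meaningful.

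Honestly, there is no real obstacle here: the lemma is essentially a restatement of the fact that dualization transposes the incidence relation and carries the orientation through unchanged, so the only thing to be careful about is bookkeeping—making sure the row/column index roles are swapped correctly when transposing and that the ambient index sets $V$ and $E$ line up with the correct dimension after passing to $H^*$. I would present the proof as a short direct computation with one displayed chain of equalities for the incident case, something like
\begin{equation*}
\bigl(\mathrm{H}(G)^{\mathrm{T}}\bigr)_{ji} = \eta_{ij} = \sigma(v_i, e_j) = \sigma^*(e_j, v_i) = \bigl(\mathrm{H}(G^*)\bigr)_{ji},
\end{equation*}
together with the parallel observation that the off-support entries are all zero because $(v_i,e_j)\in\mathcal{I}$ if and only if $(e_j,v_i)\in\mathcal{I}^*$.
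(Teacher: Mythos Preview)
Your proof is correct: the entrywise verification you outline is exactly the natural argument, and the displayed chain $\bigl(\mathrm{H}(G)^{\mathrm{T}}\bigr)_{ji} = \eta_{ij} = \sigma(v_i,e_j) = \sigma^*(e_j,v_i) = \bigl(\mathrm{H}(G^*)\bigr)_{ji}$ together with the equivalence $(v_i,e_j)\in\mathcal{I}\iff(e_j,v_i)\in\mathcal{I}^*$ is all that is needed. Note that the paper does not actually supply its own proof of this lemma---it is quoted from \cite{ReffRusnak1} (Theorem~4.1 there)---so there is no in-paper argument to compare against; your direct computation is the standard one and would match what appears in the cited source.
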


The \emph{degree matrix} of an oriented hypergraph $G$ is defined as $D(G):=\text{diag}(d_1,d_2,\allowbreak\ldots,d_n)$.  The \emph{Laplacian matrix} is defined as $L(G):=D(G)-A(G).$  The Laplacian matrix of an oriented hypergraph can also be written in terms of the incidence matrix.

\begin{lemma}[\cite{ReffRusnak1}, Corollary 4.4]\label{OHLapIncidenceRelation}
If $G$ is an oriented hypergraph, then
\begin{enumerate}
\item $L(G)=D(G)-A(G)=\mathrm{H}(G)\mathrm{H}(G)^{\text{T}}$, and
\item $L(G^{\ast })=D(G^{\ast})-A(G^{\ast })=\mathrm{H}(G)^T\mathrm{H}(G).$
\end{enumerate}
\end{lemma}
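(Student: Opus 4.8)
The plan is to prove both identities by a direct entrywise computation, exploiting Lemma~\ref{OHIncidenceMatrixDualTranspose} to collapse the two parts into essentially one argument. First I would establish part (1): writing $\mathrm{H}(G)=(\eta_{ij})$, the $(i,j)$ entry of $\mathrm{H}(G)\mathrm{H}(G)^{\text{T}}$ is $\sum_{e\in E}\eta_{ie}\eta_{je}$. For the diagonal ($i=j$) this sum is $\sum_{e\in E}\eta_{ie}^2 = \sum_{e\ni v_i}\sigma(v_i,e)^2$, which counts the incidences at $v_i$, i.e.\ $d_i$; so the diagonal of $\mathrm{H}(G)\mathrm{H}(G)^{\text{T}}$ agrees with $D(G)$. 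For the off-diagonal ($i\neq j$), only edges $e$ incident to both $v_i$ and $v_j$ contribute, and each such $e$ contributes $\sigma(v_i,e)\sigma(v_j,e) = -\sgn_e(v_i,v_j)$ by the definition of the adjacency signature; hence $\sum_{e\in E}\eta_{ie}\eta_{je} = -\sum_{e}\sgn_e(v_i,v_j) = -a_{ij}$. Combining the two cases gives $\mathrm{H}(G)\mathrm{H}(G)^{\text{T}} = D(G)-A(G) = L(G)$, which is part (1).

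Next I would deduce part (2) by applying part (1) to the incidence dual $G^\ast$ in place of $G$. By definition of $L$, we have $L(G^\ast) = D(G^\ast)-A(G^\ast) = \mathrm{H}(G^\ast)\mathrm{H}(G^\ast)^{\text{T}}$. Now invoke Lemma~\ref{OHIncidenceMatrixDualTranspose}, which says $\mathrm{H}(G^\ast) = \mathrm{H}(G)^{\text{T}}$; substituting, $\mathrm{H}(G^\ast)\mathrm{H}(G^\ast)^{\text{T}} = \mathrm{H}(G)^{\text{T}}\bigl(\mathrm{H}(G)^{\text{T}}\bigr)^{\text{T}} = \mathrm{H}(G)^{\text{T}}\mathrm{H}(G)$. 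This yields part (2) with no further work. One bookkeeping point worth spelling out is that the coadjacency signature of $G^\ast$ is indeed computed by the same formula relative to $\sigma^\ast$, and that $\sigma^\ast(e,v)=\sigma(v,e)$, so that applying the "model" computation of part (1) to $G^\ast$ is legitimate — but this is exactly what is packaged into the definitions given in the background section, so it requires only a sentence.

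The only genuine subtlety — and the closest thing to an obstacle — is the simplicity assumption and the treatment of the degree count. Because the paper assumes all hypergraphs are simple, each vertex is incident to a given edge at most once, so $\eta_{ie}^2$ is $1$ exactly when $(v_i,e)\in\mathcal{I}$ and $0$ otherwise, making $\sum_e \eta_{ie}^2$ literally equal to the degree $d_i$ as defined (number of incidences containing $v_i$). If one wanted to handle non-simple hypergraphs the diagonal entry would pick up multiplicities, but under the standing hypothesis this is clean. Similarly, for the off-diagonal case one should note that if $v_i$ and $v_j$ are \emph{not} adjacent then no edge is incident to both, the sum $\sum_e \eta_{ie}\eta_{je}$ is empty, and both sides are $0$, consistent with the convention $a_{ij}=0$ in that case. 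Beyond these remarks the proof is a short, routine matrix-multiplication argument, so I would present it compactly: one display for the diagonal entries, one for the off-diagonal entries, then the one-line reduction of part (2) to part (1) via Lemma~\ref{OHIncidenceMatrixDualTranspose}.
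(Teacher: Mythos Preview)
Your argument is correct and is the standard entrywise verification. Note, however, that the paper does not actually prove this lemma: it is quoted from \cite{ReffRusnak1} as Corollary~4.4 and stated without proof here, so there is no in-paper argument to compare against. Your computation is precisely the expected one, and your reduction of part (2) to part (1) via Lemma~\ref{OHIncidenceMatrixDualTranspose} is exactly how the cited reference organizes it.
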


Since the eigenvalues of a symmetric matrix $A\in\mathbb{R}^{n\times n}$ are real, we will assume that they are labeled and ordered according to the following convention:
\[ \lambda_{\min}(A)=\lambda_n(A) \leq \lambda_{n-1}(A) \leq \cdots \leq \lambda_2(A) \leq \lambda_1(A)=\lambda_{\max}(A).\]

\section{Intersection Graphs}\label{IntersectionGraphsSection}

The \emph{$k$-section} (\emph{clique graph}) of an oriented hypergraph $G=(V,E,\mathcal{I},\sigma)$ is the oriented hypergraph $[G]_k$ with the same vertex set as $G$ and edge set consisting of $f\subseteq V$ that satisfies either of the following:
\begin{enumerate}
\item $|f|=k$ and $f\subseteq e$ for some $e\in E$, or
\item $|f|<k$ and $f= e$ for some $e\in E$.
\end{enumerate}
The incidence signs for each $f$ are carried over from $e$ so that $\sigma_{[G]_k}(v,f)=\sigma_G(v,e)$.  The $k$-section of an oriented hypergraph is a generalization of the $k$-section of a hypergraph (see, for example, Berge \cite[p.26]{MR1013569}).

The \emph{strict $k$-section} is the oriented hypergraph $\llbracket G\rrbracket _k$ that is the same as $[G]_k$, but without condition (2).

The \emph{intersection graph} (\emph{line graph}, or \emph{representative graph}) of $G$ is the oriented hypergraph $\Lambda(G)$ whose vertices are the edges of $G$, and edges of the form $ef$ whenever $e\cap f \neq \varnothing$ in $G$.  The incidence signs are carried over to the intersection graph so that if $v\in e\cap f$ in $G$, then $\sigma_{\Lambda(G)}(e,ef)=\sigma(v,e)$.  The intersection graph of an oriented hypergraph simultaneously generalizes the intersection graph of a hypergraph \cite[p.31]{MR1013569} and the line graph of a signed graph introduced by Zaslavsky \cite{MITTOSSG}.  See Figure \ref{OHSectionGraphs} for an example of a 2-section, strict 2-section and intersection graph of an oriented hypergraph $G$.

\begin{figure}[!ht]
    \includegraphics[scale=0.7]{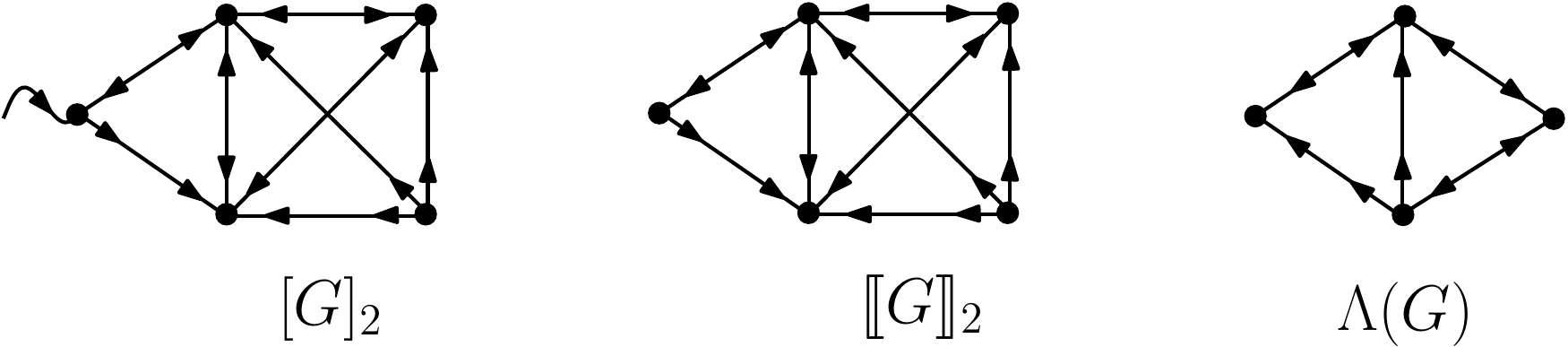}\centering
    \caption{The 2-section $[G]_2$, strict 2-section $\llbracket G \rrbracket_2$ and intersection graph $\Lambda(G)$ of $G$ from Figure \ref{OHEx}.}\label{OHSectionGraphs}
\end{figure}

The following theorem shows how to obtain the intersection graph of an oriented hypergraph $G$ by finding the strict 2-section of the dual $G^*$.  This generalizes Berge's known result for hypergraphs \cite[Prop. 1, p.33]{MR1013569}.

\begin{theorem}\label{LineGraphDualRelationship} If $G$ is a linear oriented hypergraph, then
$ \Lambda(G) = \llbracket G^*\rrbracket _2$.
\end{theorem}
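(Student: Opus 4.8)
The plan is to prove the equality of the two oriented hypergraphs $\Lambda(G)$ and $\llbracket G^*\rrbracket_2$ by checking that they have the same vertex set, the same edge family, the same incidences, and the same incidence orientation; since every hypergraph here is simple, this is enough. For the vertices, $V(\Lambda(G)) = E(G) = V(G^*) = V(\llbracket G^*\rrbracket_2)$ directly from the definitions. For the edges, recall that the edges of $G^*$ are the vertices of $G$, and that a vertex $v$ of $G$, viewed as an edge of $G^*$, is its star $\{e \in E(G) : v \in e\}$, since $(e,v) \in \mathcal{I}^*$ exactly when $(v,e)\in\mathcal{I}$. Hence a $2$-subset $\{e,f\} \subseteq E(G)$ is an edge of $\llbracket G^*\rrbracket_2$ iff it is contained in the star of some vertex $v$ of $G$, i.e.\ iff some $v$ satisfies $v\in e$ and $v\in f$, i.e.\ iff $e\cap f \neq \varnothing$ --- precisely the defining condition for $\{e,f\}$ to be an edge of $\Lambda(G)$.

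Next I would match incidences and orientations. In both $\Lambda(G)$ and $\llbracket G^*\rrbracket_2$ the edge $\{e,f\}$ is incident to exactly $e$ and $f$, so the incidence sets agree. For the orientation, linearity forces $|e\cap f| = 1$ for every edge $\{e,f\}$ of $\Lambda(G)$; write $e\cap f = \{v\}$. By definition $\sigma_{\Lambda(G)}(e,\{e,f\}) = \sigma_G(v,e)$. On the other side, $\{e,f\}$ is contained in the unique edge of $G^*$ that is the star of $v$, and the strict $2$-section carries over the incidence sign from that edge, so $\sigma_{\llbracket G^*\rrbracket_2}(e,\{e,f\}) = \sigma_{G^*}(e,v) = \sigma_G(v,e)$ by the definition of the coincidence orientation. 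Thus the orientations coincide as well, and $\Lambda(G) = \llbracket G^*\rrbracket_2$.

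The one place where the hypothesis does real work --- and the step I expect to be the crux --- is the well-definedness of these ``carry over'' orientations. If $G$ were not linear, a pair $\{e,f\}$ with $|e\cap f| \geq 2$ would be contained in the stars of several vertices, so the recipes defining $\sigma_{\Lambda(G)}$ and $\sigma_{\llbracket G^*\rrbracket_2}$ would each offer several competing values $\sigma_G(v,e)$, and neither object would be well defined; linearity is exactly what collapses this to a single vertex $v$ per edge. Once that is in hand, the proof is a routine unwinding of the definitions of the incidence dual, the strict $2$-section, and the intersection graph, with no further obstacles. It may also be worth remarking explicitly that, since $\Lambda(G)$ only ever has $2$-element edges, passing to the \emph{strict} $2$-section (dropping condition~(2) from the $k$-section) is what makes the identity hold on the nose.
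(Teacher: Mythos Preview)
Your proof is correct and follows essentially the same approach as the paper: both verify directly that the vertex sets, edge sets, and incidence orientations of $\Lambda(G)$ and $\llbracket G^*\rrbracket_2$ coincide by unwinding the definitions. Your version is actually a bit more explicit than the paper about where the linearity hypothesis is used (namely, to make the carried-over incidence orientations well defined), but the argument is the same.
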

\begin{proof}
The equivalences
\[v \in V(\Lambda(G))\leftrightarrow v\in E(G) \leftrightarrow v \in V(G^*) \leftrightarrow v\in V(\llbracket G^*\rrbracket _2),\]
verifies $V(\Lambda(G))=V(\llbracket G^*\rrbracket _2)$.  Similarly, the equivalences
\begin{align*} 
e_ie_j \in E(\Lambda(G)) &\leftrightarrow \exists e_i, e_j \in E(G) \text{ such that } \exists v\in V(G) \text{ such that } v\in e_i \cap e_j, \\
&\leftrightarrow \exists e_i, e_j \in V(G^*) \text{ such that } \exists v\in E(G^*) \text{ such that } e_i, e_j\in v,\\
&\leftrightarrow \exists e_i, e_j \in V(\llbracket G^*\rrbracket _2) \text{ such that } \exists v\in E(\llbracket G^*\rrbracket _2) \text{ such that } e_i, e_j\in v,\\
&\leftrightarrow e_i e_j \in E(\llbracket G^*\rrbracket _2),
\end{align*} 
confirms $E(\Lambda(G))=E(\llbracket G^*\rrbracket _2)$.  Finally, along with the inherited incidences, the equivalences
\[ \sigma_{\Lambda(G)}(e_i,e_ie_j) = \sigma_{G}(v,e_i)   = \sigma_{G^*}(e_i,v) =  \sigma_{\llbracket G^*\rrbracket _2}(e_i,e_ie_j),\]
show that the incidence signs are the same for $\Lambda(G)$ and $\llbracket G^*\rrbracket _2$.  Therefore, $\Lambda(G)=\llbracket G^*\rrbracket _2$.
\end{proof}

\begin{corollary} If $G$ is a linear oriented hypergraph, then $\Lambda(G^*) = \llbracket G\rrbracket _2$.
\end{corollary}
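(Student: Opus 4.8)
The plan is to obtain this as a direct consequence of Theorem \ref{LineGraphDualRelationship} applied to $G^*$ in place of $G$, once two elementary facts are in place: that the incidence dual is an involution, and that linearity is a self-dual property. For the first, unwinding the definition of the dual twice gives $(G^*)^* = (V, E, (\mathcal{I}^*)^*, (\sigma^*)^*)$ with $(\mathcal{I}^*)^* = \{(v,e) : (e,v) \in \mathcal{I}^*\} = \mathcal{I}$ and $(\sigma^*)^*(v,e) = \sigma^*(e,v) = \sigma(v,e)$, so $(G^*)^* = G$; alternatively this follows from Lemma \ref{OHIncidenceMatrixDualTranspose}, since $\mathrm{H}((G^*)^*) = \mathrm{H}(G^*)^{\mathrm{T}} = \mathrm{H}(G)$ and the incidence matrix recovers the oriented hypergraph.

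For the second fact, I would verify that $G$ is linear if and only if $G^*$ is linear. Since all hypergraphs here are simple, $G$ fails to be linear exactly when there exist two distinct edges and two distinct vertices with each of those vertices incident to each of those edges; exchanging the roles of vertices and edges leaves this statement unchanged, and it is precisely the failure of $G^*$ to be linear. Hence linearity of $G$ transfers to $G^*$.

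With these in hand the corollary is immediate: $G^*$ is a linear oriented hypergraph, so Theorem \ref{LineGraphDualRelationship} gives $\Lambda(G^*) = \llbracket (G^*)^* \rrbracket_2 = \llbracket G \rrbracket_2$. I do not anticipate a genuine obstacle here; the only point that deserves care is confirming that the linearity hypothesis of Theorem \ref{LineGraphDualRelationship} --- which is what guarantees that the incidence signs of the line graph are unambiguously defined --- is inherited by $G^*$, and that is exactly the self-duality of linearity observed above.
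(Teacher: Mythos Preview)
Your proposal is correct and follows the intended approach: the paper states this corollary without proof, immediately after Theorem \ref{LineGraphDualRelationship}, and the implicit argument is precisely to apply that theorem to $G^*$ using $(G^*)^*=G$. Your added verification that linearity is self-dual (so that the hypothesis of Theorem \ref{LineGraphDualRelationship} is met by $G^*$) is a detail the paper omits but which is indeed needed; your symmetry argument for it is sound.
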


As a generalization of Berge's result, for any simple oriented signed graph $G$, there is a linear oriented hypergraph $H$ that has $G$ as its intersection graph.  It turns out that $H=G^*$ is one such oriented hypergraph.
\begin{corollary}\label{IntersectionGraphExistenceSG} If $G$ is a 2-uniform linear oriented hypergraph (that is, $G$ is an simple oriented signed graph), then $\Lambda(G^*)=G$.
\end{corollary}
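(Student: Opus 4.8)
The plan is to derive Corollary \ref{IntersectionGraphExistenceSG} from the two already-established facts immediately preceding it: Theorem \ref{LineGraphDualRelationship}, which says $\Lambda(G)=\llbracket G^*\rrbracket_2$ for any linear oriented hypergraph, and Corollary \ref{IntersectionGraphExistenceSG}'s direct predecessor, $\Lambda(G^*)=\llbracket G\rrbracket_2$. Applying the latter with $G$ a $2$-uniform linear oriented hypergraph gives $\Lambda(G^*)=\llbracket G\rrbracket_2$, so it suffices to show $\llbracket G\rrbracket_2=G$ whenever $G$ is $2$-uniform. Alternatively, one can observe that $G^*$ is linear (since $G$ is simple, no two vertices of $G$ lie in more than one common edge, which is exactly linearity of $G^*$), apply Theorem \ref{LineGraphDualRelationship} to $G^*$ to get $\Lambda(G^*)=\llbracket (G^*)^*\rrbracket_2=\llbracket G\rrbracket_2$, and again reduce to the claim $\llbracket G\rrbracket_2=G$. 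I would use whichever of these routings is cleanest; both hinge on the same final identity.

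So the real content is the elementary verification that the strict $2$-section of a $2$-uniform oriented hypergraph is the hypergraph itself. First I would check the vertex sets agree: $[G]_2$ (and hence $\llbracket G\rrbracket_2$) has the same vertex set as $G$ by definition. Next, the edges: an edge $f$ of $\llbracket G\rrbracket_2$ satisfies $|f|=2$ and $f\subseteq e$ for some $e\in E(G)$; since $G$ is $2$-uniform every $e$ has $|e|=2$, so $f\subseteq e$ forces $f=e$, and conversely every edge of $G$ is such an $f$. (Linearity of $G$, i.e.\ simplicity here, ensures there is no issue with an $f$ arising from two different edges collapsing — it is the \emph{same} set $f=e$ regardless, so edges are not duplicated.) Finally the incidence signs: by construction $\sigma_{\llbracket G\rrbracket_2}(v,f)=\sigma_G(v,e)$ where $f\subseteq e$, and since $f=e$ this is just $\sigma_G(v,f)$, so the incidence orientations coincide. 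Hence $\llbracket G\rrbracket_2=G$.

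Assembling these, I would write: since $G$ is simple (linear and $2$-uniform), $G^*$ is linear, so the preceding corollary gives $\Lambda(G^*)=\llbracket G\rrbracket_2$; and the paragraph above shows $\llbracket G\rrbracket_2=G$ because $2$-uniformity makes every $f\subseteq e$ with $|f|=2$ equal to $e$. Combining, $\Lambda(G^*)=G$.

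I do not expect any genuine obstacle here — the statement is essentially a specialization of Theorem \ref{LineGraphDualRelationship} plus the triviality $\llbracket G\rrbracket_2=G$ in the $2$-uniform case. The only point requiring a moment's care is making explicit \emph{why} $\llbracket G\rrbracket_2=G$: one must invoke $2$-uniformity to kill the ``$f\subsetneq e$'' possibilities and note that simplicity (which the parenthetical in the statement already grants) prevents edge multiplicities, so that the map $e\mapsto f$ is in fact the identity on edge sets rather than merely a surjection. Everything else is bookkeeping on vertices and inherited incidence signs.
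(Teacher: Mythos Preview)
Your proposal is correct and follows essentially the same route as the paper, which gives no explicit proof and treats the statement as an immediate consequence of the preceding corollary $\Lambda(G^*)=\llbracket G\rrbracket_2$. You have simply supplied the details the paper leaves implicit, namely that $G^*$ is linear (so the preceding result applies) and that $\llbracket G\rrbracket_2=G$ for a $2$-uniform $G$.
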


Moreover, for any simple oriented signed graph $G$, there are infinitely many linear oriented hypergraphs with $G$ as their intersection graph.
\begin{corollary}\label{InfiniteFamilyIntersectionGraphExistenceSG}  If $G$ is a 2-uniform linear oriented hypergraph (that is, $G$ is an simple oriented signed graph), then there exists an infinite family of linear oriented hypergraphs $\mathcal{H}$ such that for any $H\in\mathcal{H}$,  $\Lambda(H)=G$.
\end{corollary}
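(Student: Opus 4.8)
The plan is to produce the family by starting from the single witness hypergraph supplied by Corollary~\ref{IntersectionGraphExistenceSG} and then enlarging its vertex set in infinitely many ways that do not disturb the intersection graph. Corollary~\ref{IntersectionGraphExistenceSG} gives $\Lambda(G^{*})=G$, and $G^{*}$ is itself a linear oriented hypergraph: two edges $v,w$ of $G^{*}$ are two vertices of $G$, and their intersection in $G^{*}$ is the set of edges of the simple graph $G$ joining $v$ and $w$, which has at most one element. The guiding observation is that $\Lambda(H)$ is determined entirely by the edge family of $H$, by which pairs of those edges meet, and---since $H$ is linear---by the incidence sign at the single vertex common to each such pair; hence adjoining to $H$ new vertices that each lie in at most one edge changes none of these data and leaves $\Lambda(H)$ unchanged.

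Concretely, I would set $H_{0}:=G^{*}$ and, for each integer $k\ge 1$, let $H_{k}$ be obtained from $H_{0}$ by adjoining $k$ new vertices $u_{1},\dots,u_{k}$: if $G$ has at least one edge, then $G^{*}$ has a nonempty edge $e_{0}$ and I place every $u_{i}$ in $e_{0}$ with $\sigma_{H_{k}}(u_{i},e_{0}):=+1$, while if $G$ has no edges the $u_{i}$ are simply left isolated. Each $H_{k}$ is then a simple linear oriented hypergraph: no incidence is repeated, and the only edge whose members change is $e_{0}$, for which $(e_{0}\cup\{u_{1},\dots,u_{k}\})\cap f=e_{0}\cap f$ for every other edge $f$. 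Next I would verify $\Lambda(H_{k})=G$ straight from the definition: $V(\Lambda(H_{k}))=E(H_{k})=E(H_{0})=V(\Lambda(H_{0}))$; a pair $ef$ belongs to $E(\Lambda(H_{k}))$ iff $e\cap f\neq\varnothing$ in $H_{k}$, which holds iff $e\cap f\neq\varnothing$ in $H_{0}$ because the $u_{i}$ lie in at most one edge and so create no new edge-meetings; and for such $ef$ the unique common vertex is an old vertex of $H_{0}$, where $\sigma_{H_{k}}$ agrees with $\sigma_{H_{0}}$, so $\sigma_{\Lambda(H_{k})}(e,ef)=\sigma_{H_{0}}(v,e)=\sigma_{\Lambda(H_{0})}(e,ef)$. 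Thus $\Lambda(H_{k})=\Lambda(H_{0})=G$ for all $k\ge 0$, and since $|V(H_{k})|=|V(H_{0})|+k$ the members of $\mathcal{H}:=\{H_{k}:k\ge 0\}$ are pairwise non-isomorphic, so $\mathcal{H}$ is the required infinite family.

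I do not expect a genuine obstacle here; the statement is a soft consequence of Corollary~\ref{IntersectionGraphExistenceSG}. The one point that requires care, rather than difficulty, is confirming that adjoining vertices of degree at most one really does preserve the edge family, the intersection pattern, and the inherited incidence signs, so that every $H_{k}$ has exactly $G$ as its intersection graph---together with the minor bookkeeping of the case in which $G$ has no edges at all.
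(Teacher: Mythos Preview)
Your proof is correct and follows essentially the same construction as the paper: start from $G^{*}$, then adjoin new degree-one vertices to a chosen edge to produce infinitely many linear oriented hypergraphs with the same intersection graph. Your version is in fact slightly more careful, since you treat separately the degenerate case where $G$ has no edges, which the paper's proof tacitly excludes by assuming an edge of nonzero size exists in $G^{*}$.
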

\begin{proof}
Consider the dual $G^*$ and pick an edge $e\in E(G^*)$ that has nonzero size $i$.  Let $j\in\{i+1,i+2,\ldots\}$ and let $H_j$ be the oriented hypergraph that is identical to $G^*$ except  edge $e$ is has an additional $j-i$ new vertices of degree 1 incident to it.  These additional vertices also create $j-i$ new incidences, all of which can be given an orientation of $+1$ (this choice is arbitrary).  Since these new vertices do not create any new edges, or new edges incident to $e$, it must be that the intersection graphs of $G^*$ and $H_j$ are the same.  By Corollary \ref{IntersectionGraphExistenceSG}, $G=\Lambda(G^*)=\Lambda(H_j)$.  Therefore, the infinite family $\mathcal{H}:=\{ H_j : j\geq i\}$ satisfies the corollary.
\end{proof}

If $k\geq \Delta(G)$, the process of enlarging the edge sizes can be continued to obtain a $k$-uniform oriented hypergraph that has $G$ as its line graph.

\begin{theorem}\label{infiniteUNIFORMfamilyTHM} If $G$ is a 2-uniform linear oriented hypergraph (that is, $G$ is an simple oriented signed graph), then for all $k\geq \Delta(G)$, there exists a $k$-uniform linear oriented hypergraph $H_k$ with $\Lambda(H_k) = G$.
\end{theorem}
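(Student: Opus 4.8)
The plan is to work with the dual $G^*$ rather than with $G$ directly, imitating the proof of Corollary \ref{InfiniteFamilyIntersectionGraphExistenceSG} but enlarging \emph{every} edge at once so that the outcome is uniform. By Corollary \ref{IntersectionGraphExistenceSG} we already have $\Lambda(G^*)=G$, and since $G$ is simple, $G^*$ is a simple oriented hypergraph; it is linear because no two vertices of $G$ lie on more than one common edge, so no two edges of $G^*$ meet in more than one vertex. The key bookkeeping observation is that the size of the edge $v$ of $G^*$ equals $\deg_G(v)$, so $r(G^*)=\Delta(G)\le k$. Hence for each edge $e$ of $G^*$, whose size $s_e$ satisfies $s_e\le k$, I can form $H_k$ by attaching $k-s_e$ brand new vertices of degree $1$ to $e$, each incident to $e$ exactly once and with all new incidence orientations set to $+1$ (this choice is arbitrary). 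This is possible precisely because $k\ge\Delta(G)$; a smaller $k$ would leave some edge of $G^*$ unreachable since edges can only be grown, not shrunk. The resulting $H_k$ is simple, since $G^*$ is simple and each new incidence occurs once.

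Next I would verify the three required properties of $H_k$. First, by construction every edge of $H_k$ has size exactly $k$, so $H_k$ is $k$-uniform. Second, $H_k$ is linear: each new vertex has degree $1$, hence lies in a unique edge and so never belongs to an intersection $e\cap f$ with $e\ne f$; therefore $e\cap f$ in $H_k$ coincides with $e\cap f$ in $G^*$, which has at most one element. Third, no new edges were introduced (only vertices were added to existing edges), so $E(H_k)=E(G^*)$.

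Then I would check $\Lambda(H_k)=\Lambda(G^*)$. The vertex sets agree, since $V(\Lambda(H_k))=E(H_k)=E(G^*)=V(\Lambda(G^*))$. The edge sets agree, since $e\cap f\ne\varnothing$ in $H_k$ if and only if $e\cap f\ne\varnothing$ in $G^*$, as noted above. The incidence signs agree: if $v\in e\cap f$ in $H_k$, then $v$ is one of the original vertices, so $\sigma_{\Lambda(H_k)}(e,ef)=\sigma_{H_k}(v,e)=\sigma_{G^*}(v,e)=\sigma_{\Lambda(G^*)}(e,ef)$. Thus $\Lambda(H_k)=\Lambda(G^*)=G$ by Corollary \ref{IntersectionGraphExistenceSG}, which finishes the argument.

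There is no deep obstacle here; the substance is entirely contained in Corollaries \ref{IntersectionGraphExistenceSG} and \ref{InfiniteFamilyIntersectionGraphExistenceSG}. The only points that need a little care are the edge-size bookkeeping — noticing $r(G^*)=\Delta(G)$, which is exactly what makes the hypothesis $k\ge\Delta(G)$ the right one — and the degenerate case in which $G$ has an isolated vertex, corresponding to an empty edge of $G^*$; attaching $k$ degree-$1$ vertices to such an edge is harmless, because those vertices still create no intersections, so the corresponding isolated vertex of the intersection graph is preserved.
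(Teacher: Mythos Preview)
Your proposal is correct and follows exactly the approach the paper takes: the paper's proof is the single sentence ``Same construction as the previous proof, but enlarge all the edges to have size $k$,'' together with the remark that $k\ge\Delta(G)$ is what makes this possible. Your write-up simply fills in the details the paper omits --- the bookkeeping $r(G^*)=\Delta(G)$, the verification of linearity, the check on incidence signs, and the isolated-vertex degenerate case --- so there is nothing to correct.
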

\begin{proof}Same construction as the previous proof, but enlarge all the edges to have size $k$.  Since $k\geq \Delta(G)$ it is guaranteed that all the edges can be enlarged to the desired size.
\end{proof}

To illustrate Corollaries \ref{IntersectionGraphExistenceSG} and \ref{InfiniteFamilyIntersectionGraphExistenceSG}, and Theorem \ref{infiniteUNIFORMfamilyTHM} see Figure \ref{OH2uniformDualandLG}.
\begin{figure}[!ht]
 \includegraphics[scale=0.8]{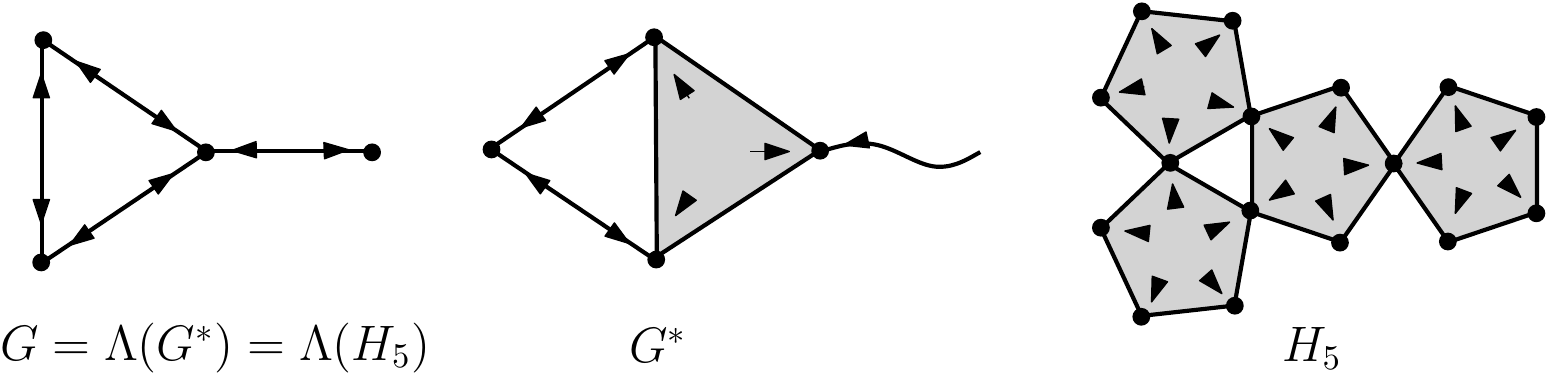}\centering
    \caption{A 2-uniform linear oriented hypergraph $G$, which is also the line graph of the dual $G^*$ and the 5-uniform oriented hypergraph $H_5$.}\label{OH2uniformDualandLG}
\end{figure}

Berge shows that every graph is the intersection graph of some linear hypergraph \cite[Prop. 2, p.34]{MR1013569} as is true for simple signed graphs.  In general, a signed graph $\Sigma$ is the intersection graph of infinitely many linear oriented hypergraphs. 
\begin{theorem}\label{SignedGraphIntGraphUniformThm}  If $\Sigma$ is a simple signed graph, then there is some linear oriented hypergraph $H$, with $\Lambda(H)=\Sigma$.
Moreover, for all $k\geq \Delta(\Sigma)$, there exists a $k$-uniform linear oriented hypergraph $H_k$ with $\Lambda(H_k)=\Sigma$.
\end{theorem}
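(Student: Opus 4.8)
The plan is to reduce both parts of the statement to the oriented case that has already been settled. The point is that $\Sigma$ is a \emph{signed} graph whereas the constructions in Corollary~\ref{IntersectionGraphExistenceSG} and Theorem~\ref{infiniteUNIFORMfamilyTHM} take an \emph{oriented} signed graph as input, so the equality $\Lambda(H)=\Sigma$ in the statement should be read at the level of signed graphs. My strategy is therefore: first put an (arbitrary) orientation on $\Sigma$, then quote the oriented-case results, and finally forget the orientation.

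First I would equip $\Sigma=(\Gamma,\sgn)$ with an orientation $\beta$ so that $G:=(\Sigma,\beta)$ is an oriented signed graph; as noted in the remarks around Equation~\ref{OSGrelation}, such a $\beta$ always exists, for instance by setting $\beta(v_i,e_{ij}):=+1$ and $\beta(v_j,e_{ij}):=-\sgn(e_{ij})$ for each edge $e_{ij}$, which makes Equation~\ref{OSGrelation} hold edge by edge with no global constraint. Since $\Sigma$ is simple, $G$ is a $2$-uniform linear oriented hypergraph. Then Corollary~\ref{IntersectionGraphExistenceSG} gives $\Lambda(G^*)=G$, so $H:=G^*$ is a linear oriented hypergraph whose intersection graph is $G$; discarding $\beta$ via the adjacency signature, the signed graph underlying $\Lambda(H)$ is exactly $\Sigma$, which proves the first claim. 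For the ``moreover'' part, I would observe that $G$ and $\Sigma$ share the underlying graph $\Gamma$, hence $\Delta(G)=\Delta(\Sigma)$, and apply Theorem~\ref{infiniteUNIFORMfamilyTHM} to $G$: for every $k\ge\Delta(G)=\Delta(\Sigma)$ it produces a $k$-uniform linear oriented hypergraph $H_k$ with $\Lambda(H_k)=G$, and passing again to underlying signed graphs yields $\Lambda(H_k)=\Sigma$. A small loose end worth a sentence: an isolated vertex of $\Sigma$ becomes an empty edge of $G^*$ and reappears as an isolated vertex of the intersection graph, so nothing is lost in the passage $\Sigma\mapsto G^*\mapsto\Lambda(G^*)$; this is already accommodated by the construction cited.

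I do not expect a genuine obstacle here — the whole argument is essentially bookkeeping. The only subtlety is exactly the signed-versus-oriented distinction: one must check that the orientation $\beta$ chosen in the first step is always available and that it becomes irrelevant once one descends from oriented signed graphs back to signed graphs through $\sgn(e_{ij})=-\beta(v_i,e_{ij})\beta(v_j,e_{ij})$. Once that is pinned down, the theorem follows by a direct appeal to Corollary~\ref{IntersectionGraphExistenceSG} and Theorem~\ref{infiniteUNIFORMfamilyTHM}.
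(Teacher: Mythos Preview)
Your proposal is correct and follows essentially the same approach as the paper: choose an orientation $\beta$ of $\Sigma$ to obtain a $2$-uniform linear oriented hypergraph $G=(\Sigma,\beta)$, and then invoke the oriented-case results (Corollary~\ref{IntersectionGraphExistenceSG} and Theorem~\ref{infiniteUNIFORMfamilyTHM}). The paper's proof is terser---it cites only Theorem~\ref{infiniteUNIFORMfamilyTHM}, since the ``moreover'' clause already yields the first assertion---but the substance is identical.
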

\begin{proof} A simple signed graph $\Sigma$ has many possible orientations $\beta$ such that $G=(\Sigma,\beta)$ is a 2-uniform linear oriented hypergraph.  The result is immediate by Theorem \ref{infiniteUNIFORMfamilyTHM}.
\end{proof}

For a 2-regular oriented hypergraph $G$, the intersection graph $\Lambda(G)$ and the dual $G^*$ are identical.    Also, there are infinitely many $k$-uniform linear oriented hypergraphs whose intersection graphs are $G^*$.

\begin{corollary}\label{DualTwoRegular} If $G$ is a 2-regular linear oriented hypergraph, then $\Lambda(G)=G^*$. 
Moreover, for all $k\geq r(G)$, there exists a $k$-uniform oriented hypergraph $H_k$ such that
$\Lambda(H_k)=G^*$.
\end{corollary}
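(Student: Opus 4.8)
The plan is to reduce both assertions to results already proved for simple oriented signed graphs, the point being that the incidence dual $G^*$ of a $2$-regular linear oriented hypergraph $G$ is \emph{itself} a simple oriented signed graph. So the first thing I would do is record the relevant behaviour of the incidence dual. Unwinding the definition of $G^*$, the edge of $G^*$ that corresponds to a vertex $v$ of $G$ consists of exactly the edges of $G$ incident to $v$, so its size equals $\deg_G(v)$; since $G$ is $2$-regular this size is $2$ for every such edge, i.e.\ $G^*$ is $2$-uniform. Dually, the vertex of $G^*$ corresponding to an edge $e$ of $G$ has degree equal to the size of $e$ in $G$, so $\Delta(G^*)=r(G)$. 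Finally, $G$ being linear says that any two vertices of $G$ lie in at most one common edge, which is precisely the statement that any two edges of $G^*$ meet in at most one vertex, so $G^*$ is linear. Hence $G^*$ is a $2$-uniform linear oriented hypergraph, i.e.\ a simple oriented signed graph, with $\Delta(G^*)=r(G)$.

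For the equality $\Lambda(G)=G^*$ I would apply Corollary~\ref{IntersectionGraphExistenceSG} with its ``$G$'' taken to be our $G^*$: since $G^*$ is a $2$-uniform linear oriented hypergraph, $\Lambda((G^*)^*)=G^*$. Since taking the incidence dual twice returns the original oriented hypergraph --- the vertex and edge sets are restored and $\sigma^{**}(v,e)=\sigma^*(e,v)=\sigma(v,e)$ --- we have $(G^*)^*=G$, and therefore $\Lambda(G)=G^*$. (Alternatively one can route this through Theorem~\ref{LineGraphDualRelationship}, which gives $\Lambda(G)=\llbracket G^*\rrbracket_2$, and then observe that the strict $2$-section of a $2$-uniform linear oriented hypergraph is that hypergraph again, because an edge of size $2$ has only itself as a size-$2$ subset and linearity prevents distinct edges from collapsing.)

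For the ``moreover'' part I would feed $G^*$ into Theorem~\ref{infiniteUNIFORMfamilyTHM}: because $G^*$ is a simple oriented signed graph, for every $k\geq\Delta(G^*)$ there is a $k$-uniform linear oriented hypergraph $H_k$ with $\Lambda(H_k)=G^*$. Substituting the identity $\Delta(G^*)=r(G)$ from the first paragraph yields exactly the asserted statement for all $k\geq r(G)$.

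I do not expect a genuine obstacle here; the construction is entirely a matter of applying earlier results to the dual. The only step requiring care is the bookkeeping under the incidence dual in the first paragraph --- checking that ``$2$-regular'' dualizes to ``$2$-uniform,'' that linearity is self-dual, and that the rank of $G$ equals the maximum degree of $G^*$ --- since these are precisely what make the hypotheses of Corollary~\ref{IntersectionGraphExistenceSG} and Theorem~\ref{infiniteUNIFORMfamilyTHM} apply and make the bound $k\geq r(G)$ coincide with the needed bound $k\geq\Delta(G^*)$.
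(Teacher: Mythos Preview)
Your proposal is correct and follows exactly the paper's approach: observe that $G^*$ is a $2$-uniform linear oriented hypergraph with $\Delta(G^*)=r(G)$, then invoke Corollary~\ref{IntersectionGraphExistenceSG} and Theorem~\ref{infiniteUNIFORMfamilyTHM}. You supply the bookkeeping (self-duality of linearity, $(G^*)^*=G$, $\Delta(G^*)=r(G)$) that the paper's one-line proof leaves implicit; the only phrasing to tighten is that ``$G$ linear'' is \emph{defined} as any two edges meeting in at most one vertex, and your statement ``any two vertices lie in at most one common edge'' is the (equivalent) dual formulation rather than the definition itself.
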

\begin{proof}
If $G$ is 2-regular, then $G^*$ is 2-uniform, hence the results are immediate by Corollaries \ref{IntersectionGraphExistenceSG} and \ref{infiniteUNIFORMfamilyTHM}.
\end{proof}

See Figure \ref{OH2regularDualandLG} for an example that illustrates Corollary \ref{DualTwoRegular}.
\begin{figure}[!ht]
 \includegraphics[scale=0.65]{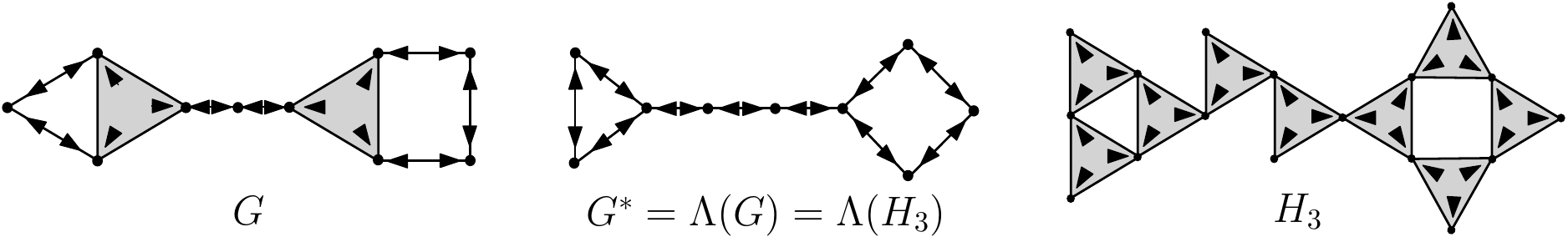}\centering
    \caption{A 2-regular linear oriented hypergraph $G$ and its dual $G^*$.  The dual $G^*$ is the line graph of both $G$ and the 3-uniform oriented hypergraph $H_3$.}\label{OH2regularDualandLG}
\end{figure}

\section{Matrices of Intersection Graphs}\label{MatricesofIGSection}

The strict 2-section $\llbracket G\rrbracket _2$ is essentially the oriented hypergraph created from the adjacencies in $G$, so on the level of adjacency matrices the oriented hypergraphs $G$ and $\llbracket G\rrbracket _2$ record the same information.
\begin{theorem}\label{OGAdjMatand2Section} If $G$ is an oriented hypergraph, then $A(G) = A(\llbracket G\rrbracket _2)=A([G]_2)$. 
\end{theorem}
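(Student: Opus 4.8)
The plan is to prove the two claimed matrix identities by unpacking the definitions of the $2$-section, the strict $2$-section, and the adjacency matrix, and showing that the adjacency signature data is transported unchanged. First I would observe that $[G]_2$ and $\llbracket G\rrbracket_2$ have the same vertex set as $G$ (namely $V$), so all three matrices are indexed by the same set and it suffices to compare entries $a_{ij}$ for a fixed pair $v_i \neq v_j$. The key point is that the incidence signs are carried over: for an edge $f$ of $[G]_2$ with $f \subseteq e$ for some $e \in E(G)$, we have $\sigma_{[G]_2}(v,f) = \sigma_G(v,e)$, and likewise for $\llbracket G\rrbracket_2$.

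The main step is a bijection-with-sign-preservation argument. Fix $v_i, v_j \in V$. In $G$, the contribution to $a_{ij}$ is $\sum_{e} \sgn_e(v_i,v_j)$ over all edges $e \in E(G)$ containing both $v_i$ and $v_j$. In $\llbracket G\rrbracket_2$, the edges are exactly the $2$-element sets $f = \{v_k,v_\ell\}$ with $\{v_k,v_\ell\} \subseteq e$ for some $e \in E(G)$; so the edges of $\llbracket G\rrbracket_2$ joining $v_i$ to $v_j$ are precisely the sets $\{v_i,v_j\}$ arising from some $e \in E(G)$ with $\{v_i,v_j\} \subseteq e$. For each such $e$ there is one edge $\{v_i,v_j\}$ of $\llbracket G\rrbracket_2$, and its adjacency sign is $-\sigma_{\llbracket G\rrbracket_2}(v_i,\{v_i,v_j\})\,\sigma_{\llbracket G\rrbracket_2}(v_j,\{v_i,v_j\}) = -\sigma_G(v_i,e)\,\sigma_G(v_j,e) = \sgn_e(v_i,v_j)$, exactly matching the term in $G$. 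One subtlety to record here: if two distinct edges $e, e'$ of $G$ both contain $\{v_i,v_j\}$, they produce the ``same'' $2$-set $\{v_i,v_j\}$; since $\llbracket G\rrbracket_2$ is a hypergraph in the sense of this paper (a family of edges, indexed by $\Omega$), these are distinct parallel edges in $\llbracket G\rrbracket_2$, so the sum over edges of $\llbracket G\rrbracket_2$ joining $v_i$ to $v_j$ really does range over all such $e$, and the adjacency-matrix sums agree term by term. Hence $A(G) = A(\llbracket G\rrbracket_2)$.

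For the equality $A(\llbracket G\rrbracket_2) = A([G]_2)$, I would note that $[G]_2$ differs from $\llbracket G\rrbracket_2$ only by the additional edges of condition (2): edges $f = e$ with $|e| < 2$, i.e. edges of size $0$ or $1$. An edge of size $0$ or $1$ contains at most one vertex, hence contributes no adjacencies and no off-diagonal entries to the adjacency matrix, and (since the adjacency matrix has zero diagonal by definition) contributes nothing on the diagonal either. Therefore $A([G]_2) = A(\llbracket G\rrbracket_2)$, completing the chain of equalities.

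I do not expect a serious obstacle; the only thing requiring care is the bookkeeping with parallel edges noted above, namely being explicit that the strict $2$-section records one edge per edge of $G$ containing the pair (not one edge per pair), so that the summation defining $a_{ij}$ really is preserved rather than collapsed. Once that is stated cleanly, the argument is a direct comparison of the defining formulas together with the sign-inheritance rule.
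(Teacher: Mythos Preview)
Your proposal is correct and follows essentially the same approach as the paper: compare entries via the sign-inheritance rule $\sigma_{\llbracket G\rrbracket_2}(v,f)=\sigma_G(v,e)$, and then observe that the extra small edges in $[G]_2$ create no adjacencies. You are in fact more careful than the paper about the parallel-edge bookkeeping (distinct $e,e'\supseteq\{v_i,v_j\}$ giving distinct edges of $\llbracket G\rrbracket_2$), which the paper leaves implicit.
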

\begin{proof}  By definition, $V(G)=V(\llbracket G\rrbracket _2)$, so both have adjacency matrices of the same size. If $v_i$ and $v_j$ are not adjacenct in $G$, then they are not adjacent in $\llbracket G\rrbracket _2$ and the $(i,j)$-entries of $A(G)$ and $A(\llbracket G\rrbracket _2)$ are 0.  Otherwise, $v_i$ and $v_j$ are adjacent and the $(i,j)$-entry of $A(G)$ is
\begin{align*}
\sum_{e\in E(G)} \sgn_e(v_i,v_j) &= \sum_{e\in E(G)} -\sigma_G(v_i,e) \sigma_G(v_j,e) \\
&= \sum_{f\in E(\llbracket G\rrbracket _2)} -\sigma_{\llbracket G\rrbracket _2}(v_i,f) \sigma_{\llbracket G\rrbracket _2}(v_j,f)\\
&= \sum_{f\in E(\llbracket G\rrbracket _2)} \sgn_f(v_i,v_j), 
\end{align*}
which is the $(i,j)$-entry of $A(\llbracket G\rrbracket _2)$.  This is also  the $(i,j)$-entry of $A([G]_2)$ since any extra edges of smaller size present in $[G]_2$ would not introduce any new adjacencies to consider. 
\end{proof}

This adjacency matrix relationship carries over to the dual in a significant way, showing that the dual and intersection graph have the same adjacency matrix.
\begin{corollary}\label{DualLineGAdjMat} If $G$ is a linear oriented hypergraph, then $A(G^*) = A(\Lambda(G))$.
\end{corollary}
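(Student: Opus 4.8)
The plan is to obtain this identity by chaining together the two preceding results rather than by any fresh computation. First I would apply Theorem~\ref{OGAdjMatand2Section} with the oriented hypergraph $G^*$ in place of $G$. That theorem carries no linearity hypothesis, and $G^*$ is an oriented hypergraph whenever $G$ is, so it gives the identity $A(G^*) = A(\llbracket G^*\rrbracket_2)$ immediately.

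Next I would invoke Theorem~\ref{LineGraphDualRelationship}, which asserts $\Lambda(G) = \llbracket G^*\rrbracket_2$ precisely because $G$ is assumed linear. Since these are literally the same oriented hypergraph (same vertices, edges, incidences, and signs), they have the same adjacency matrix, so $A(\llbracket G^*\rrbracket_2) = A(\Lambda(G))$. Combining this with the identity from the previous paragraph yields $A(G^*) = A(\Lambda(G))$, which is the claim.

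There is no real obstacle here; the argument is bookkeeping. The one point worth a moment's care is that the hypotheses line up correctly: it is the linearity of $G$ itself (not of $G^*$) that Theorem~\ref{LineGraphDualRelationship} requires, while Theorem~\ref{OGAdjMatand2Section} imposes no such condition, so the single standing assumption that $G$ is linear is exactly what drives the whole chain. As an optional sanity check one can instead verify the identity entrywise: the $(e_i,e_j)$-entry of $A(G^*)$ is $\sum_{v}-\sigma(v,e_i)\sigma(v,e_j)$ over vertices $v$ with $v\in e_i\cap e_j$, and linearity of $G$ forces at most one such $v$; that unique $v$ is precisely the incidence whose sign $\sigma(v,e_i)$ is inherited by $\Lambda(G)$ on the edge $e_ie_j$, so the corresponding adjacency sign $-\sigma(v,e_i)\sigma(v,e_j)$ in $\Lambda(G)$ matches. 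This entrywise view also explains transparently why linearity is needed: without it, $|e_i\cap e_j|$ could exceed $1$, the dual would have a multi-adjacency, and the single-edge line graph would no longer record the same sum.
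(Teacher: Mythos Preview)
Your proposal is correct and matches the paper's own proof, which simply states that the result is an immediate consequence of Theorems~\ref{LineGraphDualRelationship} and~\ref{OGAdjMatand2Section}. Your additional entrywise sanity check is a nice supplementary explanation of why linearity is needed, but the core argument is identical to the paper's.
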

\begin{proof} The result is an immediate consequence of Theorems  \ref{LineGraphDualRelationship} and \ref{OGAdjMatand2Section}.
\end{proof}

If $G$ is a $k$-uniform, we can specialize Lemma \ref{OHLapIncidenceRelation} as follows.
\begin{lemma}[\cite{ReffRusnak1},Corollary 4.5]\label{OHIncidenceMatrixDualTransposeREL}
If $G$ is a $k$-uniform oriented hypergraph, then 
\[L(G^*)=\mathrm{H}(G)^{\text{T}}\mathrm{H}(G)=kI-A(G^*).\]
\end{lemma}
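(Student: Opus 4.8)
The plan is to start from the general incidence--Laplacian identity of Lemma~\ref{OHLapIncidenceRelation}(2), namely $L(G^*) = \mathrm{H}(G)^{\mathrm{T}}\mathrm{H}(G)$, which already gives the first equality without any uniformity hypothesis. The only new content in the $k$-uniform case is the identification $L(G^*) = kI - A(G^*)$, and since $L(G^*) = D(G^*) - A(G^*)$ by definition of the Laplacian, this reduces entirely to showing $D(G^*) = kI$. So the substance of the proof is the single observation that every vertex of $G^*$ has degree $k$.

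The key step is to unwind the duality. A vertex of $G^*$ is an edge $e$ of $G$, and the degree of $e$ in $G^*$ is the number of incidences of $G^*$ containing $e$; by the definition of the incidence dual, $\mathcal{I}^* = \{(e,v) : (v,e)\in\mathcal{I}\}$, so the incidences of $G^*$ at $e$ are in bijection with the incidences of $G$ at $e$. The number of incidences of $G$ containing $e$ is exactly the size of $e$, which is $k$ because $G$ is $k$-uniform. Hence $\deg_{G^*}(e) = k$ for every $e$, so $D(G^*) = kI$. Then $L(G^*) = D(G^*) - A(G^*) = kI - A(G^*)$, and combining with Lemma~\ref{OHLapIncidenceRelation}(2) gives the full chain of equalities. (I'd also note that this is the promised specialization of Lemma~\ref{OHLapIncidenceRelation} and is essentially the dual form of the familiar fact that for a $k$-regular oriented hypergraph $L(G) = kI - A(G)$.)

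I don't anticipate a real obstacle here — the proof is a short bookkeeping argument once the duality dictionary is set up. The only place to be slightly careful is making sure the "size of $e$" is genuinely the count of incidences at $e$ rather than $|e|$: in a \emph{simple} hypergraph (the standing assumption) these agree, since each vertex in $e$ is incident to $e$ exactly once, so $k$-uniformity of $G$ does force $\deg_{G^*}(e)=k$. With that checked, the argument is complete in a few lines.

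\begin{proof}
By Lemma~\ref{OHLapIncidenceRelation}(2), $L(G^*)=\mathrm{H}(G)^{\mathrm{T}}\mathrm{H}(G)$, which holds for any oriented hypergraph. It remains to identify $L(G^*)$. By definition $L(G^*)=D(G^*)-A(G^*)$, so it suffices to show $D(G^*)=kI$. A vertex of $G^*$ is an edge $e$ of $G$, and by the definition of the incidence dual its degree in $G^*$ equals the number of incidences of $\mathcal{I}^*$ containing $e$, which equals the number of incidences of $\mathcal{I}$ containing $e$; since $G$ is simple this is $|e|$, and since $G$ is $k$-uniform this is $k$. Hence $D(G^*)=kI$ and therefore $L(G^*)=kI-A(G^*)$. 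Combining, $L(G^*)=\mathrm{H}(G)^{\mathrm{T}}\mathrm{H}(G)=kI-A(G^*)$.
\end{proof}
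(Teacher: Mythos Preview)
Your proof is correct. The paper itself does not prove this lemma; it merely quotes it from \cite{ReffRusnak1}, so there is no in-paper argument to compare against. Your route---invoke Lemma~\ref{OHLapIncidenceRelation}(2) for $L(G^*)=\mathrm{H}(G)^{\mathrm{T}}\mathrm{H}(G)$, then observe that $k$-uniformity of $G$ makes $G^*$ $k$-regular so $D(G^*)=kI$---is exactly the intended specialization, and is dual to the paper's own Lemma~\ref{RRegularMatrixRel}.
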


\begin{theorem} Let $G$ be a $k$-uniform linear oriented hypergraph.  If $\lambda$ is an eigenvalue of $A(\Lambda(G))$ (or $A(G^*)$), then $\lambda \leq k$.
\end{theorem}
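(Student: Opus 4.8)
The plan is to reduce everything to the dual $G^*$ and then exploit the fact that $kI - A(G^*)$ is a Gram matrix. First I would invoke Corollary~\ref{DualLineGAdjMat}, which (using linearity of $G$) gives $A(\Lambda(G)) = A(G^*)$; hence the two matrices have exactly the same spectrum and it suffices to prove $\lambda \le k$ for every eigenvalue $\lambda$ of $A(G^*)$.

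Next I would apply Lemma~\ref{OHIncidenceMatrixDualTransposeREL}, which for a $k$-uniform oriented hypergraph says $L(G^*) = \mathrm{H}(G)^{\mathrm{T}}\mathrm{H}(G) = kI - A(G^*)$. The key observation is that $\mathrm{H}(G)^{\mathrm{T}}\mathrm{H}(G)$ is positive semidefinite: for any $x \in \mathbb{R}^m$ we have $x^{\mathrm{T}}\mathrm{H}(G)^{\mathrm{T}}\mathrm{H}(G)x = \|\mathrm{H}(G)x\|^2 \ge 0$. Therefore every eigenvalue of $kI - A(G^*)$ is nonnegative.

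Finally, if $\lambda$ is an eigenvalue of $A(G^*)$ with eigenvector $w$, then $w$ is also an eigenvector of $kI - A(G^*)$ with eigenvalue $k - \lambda$; nonnegativity of that eigenvalue forces $k - \lambda \ge 0$, i.e. $\lambda \le k$. Since $A(\Lambda(G)) = A(G^*)$, the same bound holds for the eigenvalues of $A(\Lambda(G))$, completing the argument.

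I do not anticipate a real obstacle here: the statement is essentially a packaging of the positive semidefiniteness of $\mathrm{H}(G)^{\mathrm{T}}\mathrm{H}(G)$ together with the identity from Lemma~\ref{OHIncidenceMatrixDualTransposeREL}. The only point requiring care is that the equality $A(\Lambda(G)) = A(G^*)$ needs $G$ to be linear, which is part of the hypothesis, and that $k$-uniformity is exactly what lets us write $L(G^*) = kI - A(G^*)$ rather than a less tractable expression.
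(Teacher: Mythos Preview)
Your argument is correct and essentially identical to the paper's own proof: both invoke Corollary~\ref{DualLineGAdjMat} to identify $A(\Lambda(G))$ with $A(G^*)$, then use Lemma~\ref{OHIncidenceMatrixDualTransposeREL} to write $kI-A(G^*)=\mathrm{H}(G)^{\mathrm{T}}\mathrm{H}(G)$ and conclude via positive semidefiniteness that $k-\lambda\ge 0$. Your version is slightly more explicit in justifying the positive semidefiniteness, but the route is the same.
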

\begin{proof}
By Corollary \ref{DualLineGAdjMat} $A(\Lambda(G))=A(G^*)$, so these matrices can be used interchangeably.  Suppose that $\mathbf{x}$ is an eigenvector of $A(\Lambda(G))$ with associated eigenvalue $\lambda$.  By Lemma \ref{OHIncidenceMatrixDualTransposeREL} the following simplification can be made:
\[ L(G^*)\mathbf{x}=\mathrm{H}(G)^{\text{T}}\mathrm{H}(G)\mathbf{x}=\big(kI-A(\Lambda(G)\big)\mathbf{x} = (k-\lambda)\mathbf{x}.\]
Hence, $k-\lambda$ is an eigenvalue of  $L(G^*)=\mathrm{H}(G)^{\text{T}}\mathrm{H}(G)$.  Since $L(G^*)$ is positive semidefinite it must be that $k-\lambda \geq 0$ and therefore, $k\geq \lambda$.
\end{proof}

{\bf Question 1:} Suppose you are given a signed graph $\Sigma$ with all adjacency eigenvalues satisfying $\lambda \leq k$.  Does this mean $\Sigma$ is the intersection graph of some $k$-uniform oriented hypergraph?  We already know this is always the case if $\Delta(\Sigma) \leq k$.  For all signed graphs, it is known that all eigenvalues of $A(\Sigma)$ satisfy $\lambda \leq \Delta(\Sigma)$ \cite[Theorem 4.3]{MR2900705}.  So the question remains for situations when all adjacency eigenvalues satisfy $\lambda\leq k < \Delta(\Sigma)$ for some integer $k$.

\begin{figure}[!ht]
    \includegraphics[scale=0.7]{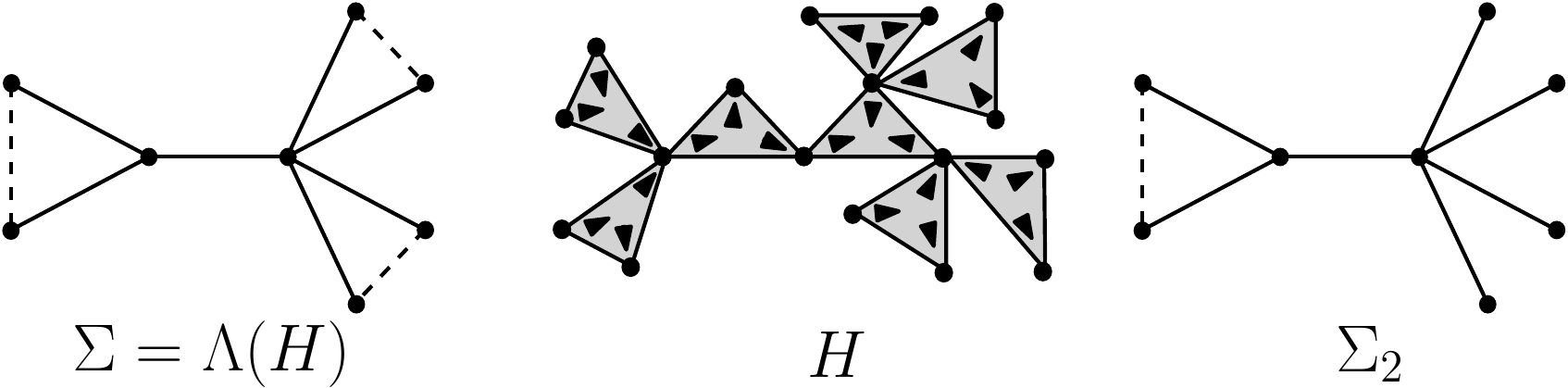}\centering
    \caption{A signed graph $\Sigma$ can be oriented so it is also the intersection graph of the depicted 3-uniform oriented hypergraph $H$.  A dashed edge denotes a sign of $-1$ and a solid edge denotes of a sign of $+1$.  The second signed graph $\Sigma_2$ is discussed in Example 1.}\label{SignedGraphEx}
\end{figure}

{\bf Example 1:} Consider the signed graph $\Sigma$ in Figure \ref{SignedGraphEx}.  In this example $\lambda_{\max}(A(\Sigma))\approx 2.03967 < k< \Delta(\Sigma)=5$.  So is $\Sigma$ the intersection graph of a 3-uniform or 4-uniform oriented hypergraph?  We already know that if $k\geq 5$ we can find a $k$-uniform oriented hypergraph $H_k$ with $\Lambda(H_k)=\Sigma$ by Theorem \ref{SignedGraphIntGraphUniformThm}.  However, it is possible to construct a 3-uniform oriented hypergraph $H$ with $\Sigma$ as its intersection graph, as shown in Figure \ref{SignedGraphEx}.  This construction can be modified to the 4-uniform case as well.  If, however, we consider the signed graph $\Sigma_2$ in Figure \ref{SignedGraphEx}, the situation is different.  For this signed graph, $\lambda_{\max}(A(\Sigma_2))\approx 2.31364 < k< \Delta(\Sigma_2)=5$, but it does not seem possible to have a 3 or 4-uniform oriented hypergraph with $\Sigma_2$ as its intersection graph.  Is there a nice classification of the exceptional cases?\\

If $G$ is $r$-regular, the dual relationship of Lemma \ref{OHLapIncidenceRelation} can also be simplified.

\begin{lemma}\label{RRegularMatrixRel}
If $G$ is an $r$-regular oriented hypergraph, then 
\[L(G)=\mathrm{H}(G)\mathrm{H}(G)^{\text{T}}=rI-A(G).\]
\end{lemma}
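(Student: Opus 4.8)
The plan is to mirror the proof of Lemma~\ref{OHIncidenceMatrixDualTransposeREL}, exploiting the duality between ``$k$-uniform'' and ``$r$-regular'' under the incidence dual. Recall that Lemma~\ref{OHLapIncidenceRelation} already gives $L(G)=\mathrm{H}(G)\mathrm{H}(G)^{\text{T}}$ for any oriented hypergraph $G$, with no regularity or uniformity hypothesis, so the only thing that needs proof is the second equality $\mathrm{H}(G)\mathrm{H}(G)^{\text{T}}=rI-A(G)$, equivalently $D(G)=rI$. But $D(G)=\mathrm{diag}(d_1,\ldots,d_n)$ and $G$ being $r$-regular means precisely that $d_i=r$ for every vertex $v_i$, so $D(G)=rI$ is immediate, and then $L(G)=D(G)-A(G)=rI-A(G)$.

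A slightly more self-contained route, which I would actually write down to match the style of the surrounding results, is to pass through the dual. If $G$ is $r$-regular, then $G^*$ is $r$-uniform: each edge of $G^*$ corresponds to a vertex $v_i$ of $G$ and has size equal to $\deg_G(v_i)=r$. Applying Lemma~\ref{OHIncidenceMatrixDualTransposeREL} to the $r$-uniform oriented hypergraph $G^*$ gives
\[
L((G^*)^*)=\mathrm{H}(G^*)^{\text{T}}\mathrm{H}(G^*)=rI-A((G^*)^*).
\]
Since $(G^*)^*=G$ and, by Lemma~\ref{OHIncidenceMatrixDualTransposeREL}, $\mathrm{H}(G^*)^{\text{T}}\mathrm{H}(G^*)=\big(\mathrm{H}(G^*)^{\text{T}}\big)\big(\mathrm{H}(G^*)^{\text{T}}\big)^{\text{T}}=\mathrm{H}(G)\mathrm{H}(G)^{\text{T}}$ using Lemma~\ref{OHIncidenceMatrixDualTransposeREL} (that $\mathrm{H}(G^*)=\mathrm{H}(G)^{\text{T}}$), the chain collapses to $L(G)=\mathrm{H}(G)\mathrm{H}(G)^{\text{T}}=rI-A(G)$, which is the claim.

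There is essentially no obstacle here: the statement is the exact dual of Lemma~\ref{OHIncidenceMatrixDualTransposeREL}, and it follows either from the trivial observation $D(G)=rI$ for an $r$-regular hypergraph combined with Lemma~\ref{OHLapIncidenceRelation}, or from Lemma~\ref{OHIncidenceMatrixDualTransposeREL} applied to $G^*$ together with $(G^*)^*=G$ and Lemma~\ref{OHIncidenceMatrixDualTranspose}. The only point worth stating carefully is the bookkeeping that $r$-regularity of $G$ is equivalent to $r$-uniformity of $G^*$, since the incidence dual swaps the roles of vertices and edges and hence swaps degrees with edge sizes; once that is noted, the rest is a one-line citation of the earlier lemmas. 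I would therefore write the proof in two sentences: first invoke $D(G)=rI$, then conclude $L(G)=D(G)-A(G)=rI-A(G)$ and combine with part~(1) of Lemma~\ref{OHLapIncidenceRelation}.
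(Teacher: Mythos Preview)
Your proposal is correct, and your first (short) argument is exactly the paper's own approach: the paper states the lemma without a displayed proof, introducing it as a specialization of Lemma~\ref{OHLapIncidenceRelation} under the hypothesis that $G$ is $r$-regular, i.e., $D(G)=rI$. Your alternative route through $G^*$ and Lemma~\ref{OHIncidenceMatrixDualTransposeREL} is also valid but unnecessary here; the two-sentence version you outline at the end is precisely what the paper intends.
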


\begin{theorem} Let $G$ be a $r$-regular oriented hypergraph.  If $\lambda$ is an eigenvalue of $A(G)$, then $\lambda \leq r$.
\end{theorem}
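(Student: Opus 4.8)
The plan is to mirror the proof of the immediately preceding theorem (the $k$-uniform, line-graph version) almost verbatim, replacing the appeal to Lemma \ref{OHIncidenceMatrixDualTransposeREL} with an appeal to Lemma \ref{RRegularMatrixRel}. First I would let $\mathbf{x}$ be an eigenvector of $A(G)$ with eigenvalue $\lambda$, so that $A(G)\mathbf{x}=\lambda\mathbf{x}$. Since $G$ is $r$-regular, Lemma \ref{RRegularMatrixRel} gives $L(G)=\mathrm{H}(G)\mathrm{H}(G)^{\text{T}}=rI-A(G)$, hence
\[ L(G)\mathbf{x}=\mathrm{H}(G)\mathrm{H}(G)^{\text{T}}\mathbf{x}=(rI-A(G))\mathbf{x}=(r-\lambda)\mathbf{x},\]
so $r-\lambda$ is an eigenvalue of $L(G)=\mathrm{H}(G)\mathrm{H}(G)^{\text{T}}$.

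Next I would invoke positive semidefiniteness: for any real matrix $M$, the product $MM^{\text{T}}$ is positive semidefinite, since $\mathbf{y}^{\text{T}}MM^{\text{T}}\mathbf{y}=\|M^{\text{T}}\mathbf{y}\|^2\geq 0$ for all $\mathbf{y}$. Applying this with $M=\mathrm{H}(G)$ shows $L(G)$ is positive semidefinite, so all of its eigenvalues are nonnegative; in particular $r-\lambda\geq 0$, which yields $\lambda\leq r$ as desired. This is really all there is to it — the whole argument is three lines once the right lemma is cited.

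There is no substantive obstacle here; the only thing to be careful about is not needing linearity of $G$. The preceding theorem assumed $G$ linear so that $A(\Lambda(G))=A(G^*)$ via Corollary \ref{DualLineGAdjMat}, but in the $r$-regular statement we are working directly with $A(G)$ and $L(G)=\mathrm{H}(G)\mathrm{H}(G)^{\text{T}}$, so no linearity hypothesis is required and none should be invoked. I would also note in passing (though it is not needed for the proof) that this recovers the classical fact that the adjacency eigenvalues of an $r$-regular graph are bounded above by $r$, with equality detecting connectivity; here the signed/oriented structure is absorbed entirely into $\mathrm{H}(G)$, so the clean Gram-matrix identity $L(G)=\mathrm{H}(G)\mathrm{H}(G)^{\text{T}}$ does all the work.
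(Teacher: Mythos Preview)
Your proposal is correct and matches the paper's proof essentially line for line: take an eigenvector $\mathbf{x}$ of $A(G)$, apply Lemma~\ref{RRegularMatrixRel} to get $L(G)\mathbf{x}=(r-\lambda)\mathbf{x}$, and conclude from the positive semidefiniteness of $L(G)=\mathrm{H}(G)\mathrm{H}(G)^{\text{T}}$ that $r-\lambda\geq 0$. Your added one-line justification of why $MM^{\text{T}}$ is positive semidefinite and your observation that linearity is not needed are both accurate and only elaborate on what the paper leaves implicit.
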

\begin{proof}
Suppose that $\mathbf{x}$ is an eigenvector of $A(G)$ with associated eigenvalue $\lambda$.  By Lemma \ref{RRegularMatrixRel}
\[ L(G)\mathbf{x}=\mathrm{H}(G)\mathrm{H}(G)^{\text{T}}\mathbf{x}=\big(rI-A(G)\big)\mathbf{x} = (r-\lambda)\mathbf{x}.\]
Hence, $r-\lambda$ is an eigenvalue of the positive semidefinite matrix $L(G)$.  Therefore, it must be that $r\geq \lambda$.
\end{proof}

An oriented hypergraph and its incidence dual have the same nonzero Laplacian eigenvalues.  
\begin{lemma}[\cite{MR3218780},Corollary 4.2]\label{OHandDualLapEvals}
If $G$ is an oriented hypergraph, then $L(G)$ and $L(G^*)$ have the same nonzero eigenvalues.
\end{lemma}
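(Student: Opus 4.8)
The plan is to reduce the statement to the classical linear-algebra fact that $MN$ and $NM$ share the same nonzero eigenvalues. By Lemma~\ref{OHLapIncidenceRelation}, writing $\mathrm{H} := \mathrm{H}(G)$, we have $L(G) = \mathrm{H}\mathrm{H}^{\text{T}}$ and $L(G^*) = \mathrm{H}^{\text{T}}\mathrm{H}$. So it suffices to prove: for any $M \in \mathbb{R}^{n\times m}$ and $N \in \mathbb{R}^{m \times n}$, the matrices $MN$ and $NM$ have the same nonzero eigenvalues (with multiplicities), and then specialize to $M = \mathrm{H}$, $N = \mathrm{H}^{\text{T}}$.

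First I would handle the nonzero eigenvalues as a set. Suppose $\lambda \neq 0$ is an eigenvalue of $MN$ with eigenvector $\mathbf{x} \neq \mathbf{0}$, so $MN\mathbf{x} = \lambda \mathbf{x}$. Then $N\mathbf{x} \neq \mathbf{0}$, since otherwise $\lambda \mathbf{x} = M(N\mathbf{x}) = \mathbf{0}$ would force $\mathbf{x} = \mathbf{0}$. Multiplying the eigenvalue equation on the left by $N$ gives $NM(N\mathbf{x}) = N(MN\mathbf{x}) = \lambda (N\mathbf{x})$, so $\lambda$ is an eigenvalue of $NM$ with eigenvector $N\mathbf{x}$. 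The reverse inclusion follows by interchanging the roles of $M$ and $N$. Hence $L(G)$ and $L(G^*)$ have the same nonzero eigenvalues.

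For multiplicities I would observe that, whenever $\lambda \neq 0$, the linear map $\mathbf{x} \mapsto N\mathbf{x}$ restricts to an injective map from the $\lambda$-eigenspace of $MN$ into the $\lambda$-eigenspace of $NM$: injectivity holds because $MN\mathbf{x} = \lambda \mathbf{x}$ with $\lambda \neq 0$ means $\mathbf{x} = \lambda^{-1} M(N\mathbf{x})$ is recovered from $N\mathbf{x}$. By symmetry the two eigenspaces have equal dimension. Since $L(G) = \mathrm{H}\mathrm{H}^{\text{T}}$ and $L(G^*) = \mathrm{H}^{\text{T}}\mathrm{H}$ are symmetric (indeed positive semidefinite), geometric and algebraic multiplicities coincide, so the nonzero eigenvalues of $L(G)$ and $L(G^*)$ agree with multiplicity. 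Alternatively, one can give a one-line proof via the determinant identity $t^{m}\det(tI_n - MN) = t^{n}\det(tI_m - NM)$, which simultaneously encodes the equality of nonzero eigenvalues and of their algebraic multiplicities; I would mention this as the slick alternative.

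The only point requiring care is that $\mathrm{H}$ is rectangular ($n \times m$), so $L(G)$ and $L(G^*)$ generally have different sizes; this is precisely why the statement is confined to nonzero eigenvalues, with the kernels of the two products absorbing the dimension discrepancy. I do not anticipate a genuine obstacle — the content is entirely the standard spectrum comparison for $AB$ versus $BA$, transported through the incidence-matrix factorization supplied by Lemma~\ref{OHLapIncidenceRelation}.
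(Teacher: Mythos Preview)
Your proof is correct. Note that the paper does not actually prove Lemma~\ref{OHandDualLapEvals}; it is quoted as Corollary~4.2 of \cite{MR3218780}, so there is no in-paper argument to compare against. That said, your approach---factor $L(G)=\mathrm{H}\mathrm{H}^{\text{T}}$ and $L(G^*)=\mathrm{H}^{\text{T}}\mathrm{H}$ via Lemma~\ref{OHLapIncidenceRelation} and then invoke the standard fact that $MN$ and $NM$ share nonzero spectra---is exactly the argument used in the source paper, so your proposal matches both in spirit and in detail.
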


{\bf Question 2:}  If $G$ is 2-regular, then $\Lambda(G)=G^*$ (see Corollary \ref{DualTwoRegular} above).  In this situation $L(G)$, $L(G^*)$ and $L(\Lambda(G))$ all have the same nonzero eigenvalues by Lemma \ref{OHandDualLapEvals}.  If $G$ is not 2-regular, when are the eigenvalues of $L(\Lambda(G))$ and $L(G^*)$ different?  By Corollary \ref{DualLineGAdjMat} the only difference between $L(\Lambda(G))$ and $L(G)$ in general is:
\[ L(G^*) - L(\Lambda(G)) = D(G^*) - D(\Lambda(G)).\]
Hence, a sufficient condition can be easily stated, yet a full classification using the structure of $G$ alone would be more interesting. 

\begin{proposition}\label{DegreeSumUniqueSpectra}
Let $G$ be an oriented hypergraph that is not 2-regular.
\begin{enumerate}
\item If $\sum_{i=1}^m d_i^{G^*} > \sum_{i=1}^m d_i^{\Lambda(G)}$, then $\exists j\in \{1,\ldots,m\}$, with\\  $\lambda_j(L(G^*))> \lambda_j\big(L(\Lambda(G))\big )$.
\item If $\sum_{i=1}^m d_i^{G^*} < \sum_{i=1}^m d_i^{\Lambda(G)}$, then $\exists j\in \{1,\ldots,m\}$, with\\  $\lambda_j(L(G^*))< \lambda_j\big(L(\Lambda(G))\big )$.
\end{enumerate}
\end{proposition}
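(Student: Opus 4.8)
The plan is a one-line trace comparison turned into a pigeonhole argument. First I would observe that $L(G^*)$ and $L(\Lambda(G))$ are real symmetric matrices of the same order: $|V(G^*)|=|E(G)|=|V(\Lambda(G))|=m$, so each has exactly $m$ real eigenvalues, ordered as in the convention fixed at the end of Section~\ref{BackgroundSection}. Note that no appeal to $A(G^*)=A(\Lambda(G))$ (hence no linearity hypothesis) is needed for what follows; the hypothesis that $G$ is not $2$-regular is present only to make the degree-sum hypotheses non-vacuous, since in the $2$-regular case discussed in Question~2 one has $\Lambda(G)=G^*$ and the two degree sums necessarily agree.

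Next I would compute traces. Since $L=D-A$ and $A$ has zero diagonal, $\tr(L(G^*))=\tr(D(G^*))=\sum_{i=1}^m d_i^{G^*}$ and likewise $\tr(L(\Lambda(G)))=\sum_{i=1}^m d_i^{\Lambda(G)}$. Because the trace of a symmetric matrix equals the sum of its eigenvalues, this yields $\sum_{j=1}^m \lambda_j(L(G^*))=\sum_{i=1}^m d_i^{G^*}$ and $\sum_{j=1}^m \lambda_j(L(\Lambda(G)))=\sum_{i=1}^m d_i^{\Lambda(G)}$.

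For part (1) I would argue by contradiction: if $\lambda_j(L(G^*))\le \lambda_j(L(\Lambda(G)))$ held for every $j\in\{1,\dots,m\}$, then summing over $j$ would give $\sum_{i=1}^m d_i^{G^*}\le \sum_{i=1}^m d_i^{\Lambda(G)}$, contradicting the hypothesis $\sum_{i=1}^m d_i^{G^*}>\sum_{i=1}^m d_i^{\Lambda(G)}$. Hence some index $j$ satisfies $\lambda_j(L(G^*))>\lambda_j(L(\Lambda(G)))$. Part (2) follows verbatim with all inequalities reversed, or equivalently by exchanging the roles of $G^*$ and $\Lambda(G)$.

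I do not anticipate a genuine obstacle: the entire proof is the identity $\sum_j \lambda_j = \tr = \sum_i d_i$ together with pigeonhole. The only point inviting further comment is that this produces merely \emph{some} index $j$ with a strict inequality; an index-localized or interlacing-type refinement would require analyzing the spectral effect of the diagonal perturbation $L(G^*)-L(\Lambda(G))=D(G^*)-D(\Lambda(G))$ (which is the form guaranteed when $G$ is linear, by Corollary~\ref{DualLineGAdjMat}), and that is the sharper classification alluded to in Question~2.
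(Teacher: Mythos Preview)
Your proposal is correct and follows essentially the same approach as the paper: both arguments compute $\tr(L(G^*))=\sum_i d_i^{G^*}=\sum_j \lambda_j(L(G^*))$ (and likewise for $\Lambda(G)$) and then observe that a strict increase in trace forces a strict increase in at least one eigenvalue. Your contradiction/pigeonhole phrasing is just a more explicit unpacking of the paper's one-sentence conclusion.
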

\begin{proof} The trace of $L(G^*)$ is $\tr(L(G^*))=\sum_{i=1}^m d_i^{G^*}= \sum_{i=1}^m \lambda_i(L(G^*))$ and the trace of $L(\Lambda(G))$ is $\tr(L(\Lambda(G)))=\sum_{i=1}^m d_i^{\Lambda(G)}= \sum_{i=1}^m \lambda_i(L(\Lambda(G)))$.  Therefore, a strict increase in trace must result in a strict increase in at least one of the eigenvalues.
\end{proof}

{\bf Example 2:} Consider the oriented hypergraph $G$ in Figure \ref{OHEx}.  The dual $G^*$ and intersection graph $\Lambda(G)$ are depicted in Figures \ref{OHDUAL} and \ref{OHSectionGraphs}.  Condition (2) of Proposition \ref{DegreeSumUniqueSpectra} is met and the conclusion can be seen in the Laplacian eigenvalues approximated in Table \ref{LapEigenvaluTable1}.\\

{\bf Example 3:} Consider the oriented hypergraph $G_2$ in Figure \ref{OHDualandIntGraphwithsameLapE} together with its dual $G_2^*$ and intersection graph $\Lambda(G_2)$.  Condition (1) of Proposition \ref{DegreeSumUniqueSpectra} is met and the conclusion can be seen in the Laplacian eigenvalues approximated in Table \ref{LapEigenvaluTable1}.

\begin{table}[!ht]
\centering
    \begin{tabular}{ c| c| c||c| c|}
    \cline{2-5}
      & $L(\Lambda(G))$ &$ L(G^*)$ & $L(\Lambda(G_2))$ &$ L(G_2^*)$  \\
    \hline
    \multicolumn{1}{|c|}{$\lambda_1$} & 4.73205 & 4.56155 &2&4\\
    \hline
     \multicolumn{1}{|c|}{$\lambda_2$}  & 3.41421 & 3.73205&0&2\\
    \hline
    \multicolumn{1}{|c|}{$\lambda_3$}  & 1.26795 & 0.43845&\cellcolor{gray}&\cellcolor{gray}\\
    \hline
  \multicolumn{1}{|c|}{$\lambda_4$}  & 0.58579 & 0.26795&\cellcolor{gray}&\cellcolor{gray}\\
    \hline
\multicolumn{1}{|c|}{$\tr$}  & 10 & 9&2&6\\
    \hline
    \end{tabular}
\caption{Approximate Laplacian eigenvalues of $\Lambda(G)$ and $G^*$ from Figures \ref{OHDUAL} and \ref{OHSectionGraphs}, as well as $\Lambda(G_2)$ and $G_2^*$ in Figure \ref{OHDualandIntGraphwithsameLapE} .}\label{LapEigenvaluTable1}
    \end{table}

{\bf Example 4:} Consider the oriented hypergraph $G_3$ in Figure \ref{OHDualandIntGraphwithsameLapE}.  Here $G_3$ is not 2-regular, and yet the eigenvalues of $L(G_3^*)$ and $L(\Lambda(G_3))$ are the same.  In fact, $L(G_3^*)=L(\Lambda(G_3))$.   A full classification of when the Laplacian eigenvalues are the same for $G^*$ and $\Lambda(G)$ would be of interest.

\begin{figure}[!ht]
    \includegraphics[scale=0.7]{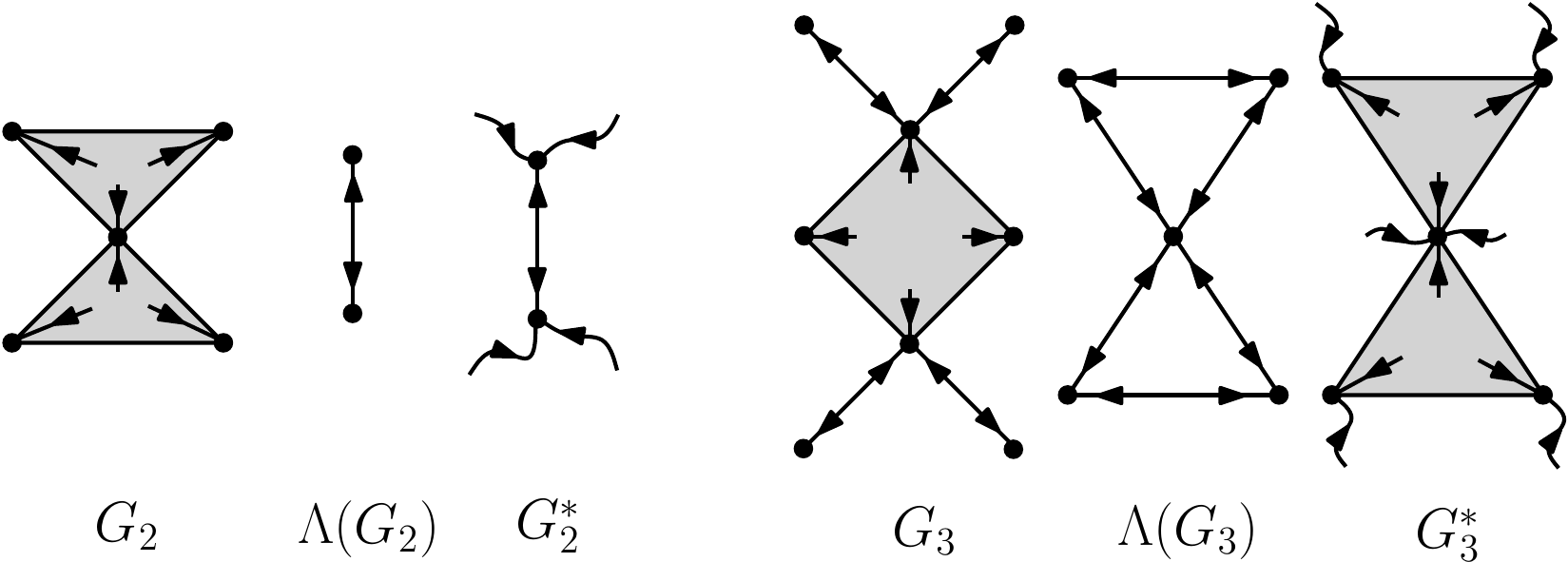}\centering
    \caption{Oriented hypergraphs considered in Examples 3 and 4.}\label{OHDualandIntGraphwithsameLapE}
\end{figure}

\section{Switching}\label{SwitchingSec}

A \emph{vertex-switching function} is any function $\zeta\colon V\rightarrow \{-1,+1\}$.  \emph{Vertex-switching} the oriented hypergraph $G=(H,\sigma)$ means replacing $\sigma$ with $\sigma^{\zeta}$, defined by
\begin{equation}
\sigma^{\zeta}(v,e)=\zeta(v)\sigma(v,e);
\end{equation}
producing the oriented hypergraph $G^{\zeta}=(H,\sigma^{\zeta})$.

An \emph{edge-switching function} is any function $\xi\colon E\rightarrow \{-1,+1\}$.  \emph{Edge-switching} the oriented hypergraph $G=(H,\sigma)$ means replacing $\sigma$ with $\sigma^\xi$, defined by 
\begin{equation}
\sigma^{\xi}(v,e)=\sigma(v,e)\xi(e);
\end{equation}
producing the oriented hypergraph $G^{\xi}=(H,\sigma^{\xi})$.  To make things more compact we will write $G^{(\zeta,\xi)}=(G,\sigma^{(\zeta,\xi)})$ when $G$ is both vertex-switched by $\zeta$ and edge-switched by $\xi$.  Switching changes the the adjacency signatures in the following way: 
\begin{align*}
\sgn_e^{(\zeta,\xi)}(v_i,v_j)=-\sigma^{(\zeta,\xi)}(v_i,e)\sigma^{(\zeta,\xi)}(v_j,e)&=-[\zeta(v_i)\sigma(v_i,e)\xi(e)][\zeta(v_j)\sigma(v_j,e)\xi(e)]\\
&=-\zeta(v_i)\sigma(v_i,e)\xi(e)^2\sigma(v_j,e)\zeta(v_j)\\
&=\zeta(v_i)\sgn_e(v_i,v_j)\zeta(v_j).
\end{align*}
The adjacency signatures are conjugated by the vertex-switching $\zeta$ and invariant under the edge-switching $\xi$. These switching operations can be encoded using matrices.

For a vertex-switching function $\zeta\colon V\rightarrow\{+1,-1\}$, we define the diagonal matrix $D_n(\zeta):=\text{diag}\big(\zeta(v_1),\zeta(v_2),\ldots,\zeta(v_n)\big)$.   Similarly for an edge-switching function $\xi\colon E\rightarrow \{+1,-1\}$, we define $D_m(\xi):=\text{diag}\big(\xi(e_1),\xi(e_2),\ldots,\xi(e_m)\big)$.  The following shows how to calculate the switched oriented hypergraph's incidence, adjacency and Laplacian matrices extending \cite[Propositions 3.1 and 4.3]{ReffRusnak1}.       

\begin{lemma}\label{OHLAHSwitchingSimilarityTrans} Let $G$ be an oriented hypergraph.  Let $\zeta\colon V\rightarrow\{+1,-1\}$ be a vertex-switching function on $G$, and  $\xi\colon E\rightarrow \{+1,-1\}$ be an edge-switching function on $G$.  Then 
\begin{enumerate}
\item $\Eta\big(G^{(\zeta,\xi)}\big)=D_n(\zeta)\Eta(G)D_m(\xi)$,
\item $A\big(G^{(\zeta,\xi)}\big)=D_n(\zeta) A(G) D_n(\zeta)$, and
\item $L\big(G^{(\zeta,\xi)}\big)=D_n(\zeta) L(G) D_n(\zeta)$.
\end{enumerate}
Moreover, 
\begin{enumerate}
\item[(4)] $\Eta\big((G^*)^{(\xi,\zeta)}\big)=D_m(\xi)\Eta(G^*)D_n(\zeta)$,
\item[(5)] $A\big((G^*)^{(\xi,\zeta)}\big)=D_m(\xi) A(G^*) D_m(\xi)$, and
\item[(6)] $L\big((G^*)^{(\xi,\zeta)}\big)=D_m(\xi) L(G^*) D_m(\xi)$.
\end{enumerate}
\end{lemma}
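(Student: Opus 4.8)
The plan is to prove each of the six identities by a direct computation at the level of matrix entries, using the definitions of the switched incidence orientation together with the incidence, adjacency and Laplacian matrices, and then to leverage the earlier lemmas to get the dual statements (4)--(6) almost for free. First I would establish (1): the $(i,j)$-entry of $\Eta\big(G^{(\zeta,\xi)}\big)$ is $\sigma^{(\zeta,\xi)}(v_i,e_j)=\zeta(v_i)\sigma(v_i,e_j)\xi(e_j)$ whenever $(v_i,e_j)\in\mathcal{I}$ and $0$ otherwise; since $D_n(\zeta)$ and $D_m(\xi)$ are diagonal, the $(i,j)$-entry of $D_n(\zeta)\Eta(G)D_m(\xi)$ is exactly $\zeta(v_i)\,\eta_{ij}\,\xi(e_j)$, which matches (and the zero pattern is preserved since multiplying by $\pm1$ does not change which entries vanish). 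For (2), the cleanest route is to use the adjacency-signature computation already carried out in the excerpt, namely $\sgn_e^{(\zeta,\xi)}(v_i,v_j)=\zeta(v_i)\sgn_e(v_i,v_j)\zeta(v_j)$; summing over $e\in E$ and noting that the set of edges through which $v_i,v_j$ are adjacent is unchanged by switching, the $(i,j)$-entry of $A\big(G^{(\zeta,\xi)}\big)$ is $\zeta(v_i)\,a_{ij}\,\zeta(v_j)$, which is precisely the $(i,j)$-entry of $D_n(\zeta)A(G)D_n(\zeta)$; the off-support entries stay $0$. For (3), since the underlying hypergraph $H$ is unchanged by switching, $D(G^{(\zeta,\xi)})=D(G)$, and because $D_n(\zeta)$ is diagonal it commutes with the diagonal matrix $D(G)$ and satisfies $D_n(\zeta)^2=I$, so $D_n(\zeta)D(G)D_n(\zeta)=D(G)$; then $L\big(G^{(\zeta,\xi)}\big)=D(G)-A\big(G^{(\zeta,\xi)}\big)=D_n(\zeta)D(G)D_n(\zeta)-D_n(\zeta)A(G)D_n(\zeta)=D_n(\zeta)L(G)D_n(\zeta)$.

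For the dual statements, the key observation is that switching $G$ by $(\zeta,\xi)$ corresponds exactly to switching $G^*$ by $(\xi,\zeta)$: from $\sigma^*(e,v)=\sigma(v,e)$ one checks that $(\sigma^{(\zeta,\xi)})^* = (\sigma^*)^{(\xi,\zeta)}$, i.e. $(G^{(\zeta,\xi)})^* = (G^*)^{(\xi,\zeta)}$. Granting this, statement (4) follows by applying Lemma \ref{OHIncidenceMatrixDualTranspose} to $G^{(\zeta,\xi)}$ and transposing (1): $\Eta\big((G^*)^{(\xi,\zeta)}\big)=\Eta\big((G^{(\zeta,\xi)})^*\big)=\Eta\big(G^{(\zeta,\xi)}\big)^{\mathrm{T}}=\big(D_n(\zeta)\Eta(G)D_m(\xi)\big)^{\mathrm{T}}=D_m(\xi)\Eta(G)^{\mathrm{T}}D_n(\zeta)=D_m(\xi)\Eta(G^*)D_n(\zeta)$, using that the $D$'s are symmetric. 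Statements (5) and (6) then follow by applying (2) and (3) to the oriented hypergraph $G^*$ in place of $G$ (with the roles of the vertex- and edge-switching functions swapped, which is exactly the form (5) and (6) are stated in), or equivalently by the identity $(G^{(\zeta,\xi)})^*=(G^*)^{(\xi,\zeta)}$ combined with Lemmas \ref{OHIncidenceMatrixDualTranspose} and \ref{OHLapIncidenceRelation}.

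I do not expect any genuine obstacle here; the proof is essentially bookkeeping. The one place that merits care is verifying the compatibility identity $(G^{(\zeta,\xi)})^* = (G^*)^{(\xi,\zeta)}$, since it is what makes (4)--(6) reduce to (1)--(3); this comes down to chasing the definition $\sigma^*(e,v)=\sigma(v,e)$ through the formula $\sigma^{(\zeta,\xi)}(v,e)=\zeta(v)\sigma(v,e)\xi(e)$ and observing that the result is symmetric in the right way, $\big(\sigma^{(\zeta,\xi)}\big)^*(e,v)=\zeta(v)\sigma(v,e)\xi(e)=\xi(e)\sigma^*(e,v)\zeta(v)=\big(\sigma^*\big)^{(\xi,\zeta)}(e,v)$. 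A secondary point worth stating explicitly is that $D_n(\zeta)$ and $D_m(\xi)$ are symmetric involutions, so conjugation by them is literally multiplication on both sides and squares to the identity; this is used tacitly in the manipulation of (3) and in passing between the ``moreover'' block and parts (2)--(3). Otherwise the argument is just entrywise matching of the defining formulas, so I would present (1)--(3) as short entrywise computations and (4)--(6) as one-line consequences of the duality lemmas.
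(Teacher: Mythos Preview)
Your proposal is correct. The paper does not supply a formal proof of this lemma; it presents the adjacency-signature identity $\sgn_e^{(\zeta,\xi)}(v_i,v_j)=\zeta(v_i)\sgn_e(v_i,v_j)\zeta(v_j)$ just before the statement and then records the lemma as an extension of \cite[Propositions 3.1 and 4.3]{ReffRusnak1}, so your entrywise verification of (1)--(3) and your use of duality (via $(G^{(\zeta,\xi)})^*=(G^*)^{(\xi,\zeta)}$ together with Lemmas \ref{OHIncidenceMatrixDualTranspose} and \ref{OHLapIncidenceRelation}) to obtain (4)--(6) are exactly the intended bookkeeping, spelled out in more detail than the paper itself provides.
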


Since switching results in similarity transformations  for both the adjacency and Laplacian matrices, it also preserves the respective eigenvalues.  A specialized version of this situation appears in \cite[Lemmas 3.1 and 4.1]{MR3218780}.

\begin{theorem} Let $G$ be an oriented hypergraph.  Let $\zeta\colon V\rightarrow\{+1,-1\}$ be a vertex-switching function on $G$, and  $\xi\colon E\rightarrow \{+1,-1\}$ be an edge-switching function on $G$. Then
\begin{enumerate}
\item $A(G)$ and $A\big(G^{(\zeta,\xi)}\big)$ have the same eigenvalues.
\item $L(G)$ and $L\big(G^{(\zeta,\xi)}\big)$ have the same eigenvalues.
\end{enumerate}
Moreover,
\begin{enumerate}
\item[(3)] $A(G^*)$ and $A\big((G^*)^{(\xi,\zeta)}\big)$ have the same eigenvalues.
\item[(4)] $L(G^*)$ and $L\big((G^*)^{(\xi,\zeta)}\big)$ have the same eigenvalues.
\end{enumerate}
\end{theorem}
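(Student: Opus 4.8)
The plan is to deduce all four parts directly from Lemma~\ref{OHLAHSwitchingSimilarityTrans}, using the elementary fact that a diagonal matrix with entries in $\{+1,-1\}$ is its own inverse. First I would observe that $D_n(\zeta)$ is diagonal with $\pm 1$ diagonal entries, so $D_n(\zeta)^2 = I_n$ and hence $D_n(\zeta)^{-1} = D_n(\zeta)$; likewise $D_m(\xi)^{-1} = D_m(\xi)$. This is the only ingredient not already packaged in the earlier lemma.

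Next, for part~(1), I would rewrite the identity from Lemma~\ref{OHLAHSwitchingSimilarityTrans}(2) as
\[ A\big(G^{(\zeta,\xi)}\big) = D_n(\zeta)\,A(G)\,D_n(\zeta) = D_n(\zeta)\,A(G)\,D_n(\zeta)^{-1}, \]
which exhibits $A\big(G^{(\zeta,\xi)}\big)$ as a similarity transform of $A(G)$. Since similar matrices have the same characteristic polynomial, they have the same eigenvalues (with multiplicities), giving part~(1). Part~(2) is the same argument with Lemma~\ref{OHLAHSwitchingSimilarityTrans}(3) in place of (2). Parts~(3) and~(4) repeat the argument verbatim for $G^*$, using Lemma~\ref{OHLAHSwitchingSimilarityTrans}(5) and~(6) together with the involution $D_m(\xi)^{-1} = D_m(\xi)$.

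There is essentially no obstacle: the content of the theorem is entirely carried by Lemma~\ref{OHLAHSwitchingSimilarityTrans}, and the only thing to verify beyond invoking it is that conjugation by a signed diagonal matrix is a genuine similarity, i.e.\ that such a matrix is invertible, which is immediate. If one prefers not to cite the bare fact that similar matrices have equal spectra, one can instead note that $D_n(\zeta)$ is orthogonal, so the conjugation is an orthogonal similarity and hence preserves the real spectrum (and, for the Laplacian, positive semidefiniteness as well, although that is not needed here). The one point worth a sentence in the writeup is that the edge-switching $\xi$ does not appear in the conjugations of $A(G)$ and $L(G)$ — only $\zeta$ does — which is consistent with the earlier computation that $\sgn$ is invariant under edge-switching, and dually that $\zeta$ does not appear in the conjugations of $A(G^*)$ and $L(G^*)$.
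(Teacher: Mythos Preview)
Your proposal is correct and follows exactly the approach the paper takes: the paper simply remarks that Lemma~\ref{OHLAHSwitchingSimilarityTrans} exhibits the switched matrices as similarity transforms, and hence the eigenvalues are preserved. Your write-up makes explicit the one detail the paper leaves implicit, namely that $D_n(\zeta)^{-1}=D_n(\zeta)$ (and likewise for $D_m(\xi)$), but otherwise the arguments are identical.
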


Lemma \ref{OHandDualLapEvals} can be generalized to find a large family of oriented hypergraphs which have the same nonzero Laplacian eigenvalues.

\begin{corollary}
Let $G$ be an oriented hypergraph.  Let $\zeta_1$ and $\zeta_2$ be vertex-switching functions on $G$, and let $\xi_1$ and $\xi_2$ be edge-switching functions on $G$. Then $L\big(G^{(\zeta_1,\xi_1)}\big)$ and $L\big((G^*)^{(\xi_2,\zeta_2)}\big)$ have the same nonzero eigenvalues.
\end{corollary}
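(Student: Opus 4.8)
The plan is to reduce the statement to results already in hand by a short chain of reductions. By part~(2) of the theorem immediately preceding this corollary, vertex- and edge-switching $G$ by $(\zeta_1,\xi_1)$ produces a Laplacian $L\big(G^{(\zeta_1,\xi_1)}\big)$ with exactly the same eigenvalues (multiplicities included) as $L(G)$. Applying part~(4) of that same theorem to the dual $G^*$, switched by $(\xi_2,\zeta_2)$ — where the order of the two switching functions is reversed because in $G^*$ the former edges play the role of vertices — gives that $L\big((G^*)^{(\xi_2,\zeta_2)}\big)$ has the same eigenvalues as $L(G^*)$. Finally, Lemma~\ref{OHandDualLapEvals} asserts that $L(G)$ and $L(G^*)$ have the same \emph{nonzero} eigenvalues. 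Composing these three facts yields the corollary.

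An alternative, more self-contained route works directly with incidence matrices and the factorization $L=\Eta\Eta^{\text{T}}$. Writing $M:=\Eta\big(G^{(\zeta_1,\xi_1)}\big)=D_n(\zeta_1)\Eta(G)D_m(\xi_1)$ by part~(1) of Lemma~\ref{OHLAHSwitchingSimilarityTrans}, Lemma~\ref{OHLapIncidenceRelation} gives $L\big(G^{(\zeta_1,\xi_1)}\big)=MM^{\text{T}}$, and one computes $M^{\text{T}}M=D_m(\xi_1)\Eta(G)^{\text{T}}\Eta(G)D_m(\xi_1)$ using $D_n(\zeta_1)^2=I$. On the dual side, combining part~(4) of Lemma~\ref{OHLAHSwitchingSimilarityTrans} with $\Eta(G^*)=\Eta(G)^{\text{T}}$ (Lemma~\ref{OHIncidenceMatrixDualTranspose}) and $D_n(\zeta_2)^2=I$ yields
\[
L\big((G^*)^{(\xi_2,\zeta_2)}\big)=D_m(\xi_2)\,\Eta(G)^{\text{T}}\Eta(G)\,D_m(\xi_2).
\]
This matrix is conjugated to $M^{\text{T}}M$ by the signed diagonal involution $D_m(\xi_1)D_m(\xi_2)$, hence has the same eigenvalues as $M^{\text{T}}M$; and the elementary fact that $MM^{\text{T}}$ and $M^{\text{T}}M$ share the same nonzero spectrum (with multiplicities) then finishes the proof.

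The step to watch is not a genuine obstacle but a bookkeeping point: one must respect the role-swap of $\zeta$ and $\xi$ under dualization, and one must keep the qualifier \emph{nonzero}, since $L\big(G^{(\zeta_1,\xi_1)}\big)$ is $n\times n$ while $L\big((G^*)^{(\xi_2,\zeta_2)}\big)$ is $m\times m$. That size mismatch is exactly what prevents equality of the full spectra and forces the argument to pass through either Lemma~\ref{OHandDualLapEvals} or the $MM^{\text{T}}$ versus $M^{\text{T}}M$ identity.
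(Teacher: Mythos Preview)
Your first argument is exactly the route the paper intends: the corollary is stated without proof, but the sentence introducing it (``Lemma~\ref{OHandDualLapEvals} can be generalized\ldots'') makes clear that one is meant to combine parts~(2) and~(4) of the preceding theorem with Lemma~\ref{OHandDualLapEvals}, precisely as you do. Your alternative incidence-matrix computation is also correct and gives a pleasant self-contained variant, though it is not needed here.
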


If two oriented hypergraphs are the same up to a vertex or edge switching, then the corresponding duals, strict 2-sections and intersection graphs have related switching relationships.
\begin{theorem} Let $G_1=(H,\sigma_1)$ and $G_2=(H,\sigma_2)$ be linear oriented hypergraphs.  Let $\zeta$ be a vertex-switching function on $G_1$ and $G_2$, and  $\xi$ be an edge-switching function on $G_1$ and $G_2$. If $G_1=G_2^{(\zeta,\xi)}$, then
\begin{enumerate}
\item $ G_1^*=(G_2^*)^{(\xi,\zeta)}$. 
\end{enumerate}
Moreover, there exists edge switching functions $\hat{\xi}$ on $\llbracket G_2\rrbracket _2$ and $\hat{\zeta}$ on $\Lambda(G_2)$ such that
\begin{enumerate}
\item[(2)] $\llbracket G_1\rrbracket _2=\llbracket G_2\rrbracket _2^{(\zeta,\hat{\xi})}$, and
\item[(3)] $\Lambda(G_1)=\Lambda(G_2)^{(\xi,\hat{\zeta})}$. 
\end{enumerate}
\end{theorem}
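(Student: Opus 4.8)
The plan is to prove all three identities with a single device: two oriented hypergraphs over the same underlying hypergraph coincide as soon as their incidence orientations agree (the adjacency and coadjacency signatures being derived from $\sigma$), and switching never changes the underlying hypergraph. So in each case I would first observe that the two objects being compared sit over the same underlying hypergraph, then check incidence orientations pointwise using the unpacked hypothesis $\sigma_1(v,e)=\zeta(v)\,\sigma_2(v,e)\,\xi(e)$ for every $(v,e)\in\mathcal{I}(H)$.

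For (1), both $G_1^*$ and $(G_2^*)^{(\xi,\zeta)}$ have underlying hypergraph $H^*$, and for an incidence $(e,v)\in\mathcal{I}(H^*)$,
\[
\sigma_{G_1^*}(e,v)=\sigma_1(v,e)=\zeta(v)\,\sigma_2(v,e)\,\xi(e)=\xi(e)\,\sigma_{G_2^*}(e,v)\,\zeta(v),
\]
whose right-hand side is exactly $\sigma_{(G_2^*)^{(\xi,\zeta)}}(e,v)$ once one notes that in forming $(G_2^*)^{(\xi,\zeta)}$ the function $\xi$ is the vertex-switching function (its domain $E(G_2)$ is $V(G_2^*)$) and $\zeta$ the edge-switching function. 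Equivalently, transposing $\Eta(G_1)=D_n(\zeta)\Eta(G_2)D_m(\xi)$ (Lemma~\ref{OHLAHSwitchingSimilarityTrans}(1)) and applying Lemma~\ref{OHIncidenceMatrixDualTranspose} yields $\Eta(G_1^*)=D_m(\xi)\Eta(G_2^*)D_n(\zeta)=\Eta\big((G_2^*)^{(\xi,\zeta)}\big)$ by Lemma~\ref{OHLAHSwitchingSimilarityTrans}(4).

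For (2), since $E(G_1)=E(G_2)=E(H)$ the strict $2$-sections $\llbracket G_1\rrbracket_2$ and $\llbracket G_2\rrbracket_2$ have the same underlying hypergraph. Here linearity enters: each edge $f=\{v_i,v_j\}$ of $\llbracket G_2\rrbracket_2$ lies in a \emph{unique} $e\in E(H)$, so $\hat{\xi}(f):=\xi(e)$ is a well-defined edge-switching function on $\llbracket G_2\rrbracket_2$. Since the incidence sign in a strict $2$-section is inherited from its containing edge, for each incidence $(v_i,f)$ of $\llbracket G_1\rrbracket_2$,
\[
\sigma_{\llbracket G_1\rrbracket_2}(v_i,f)=\sigma_1(v_i,e)=\zeta(v_i)\,\sigma_2(v_i,e)\,\xi(e)=\zeta(v_i)\,\sigma_{\llbracket G_2\rrbracket_2}(v_i,f)\,\hat{\xi}(f),
\]
which is the incidence orientation of $\llbracket G_2\rrbracket_2^{(\zeta,\hat{\xi})}$, giving (2).

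For (3), I would run the same argument directly: linearity forces $e\cap f=\{v\}$ to be a singleton whenever $ef$ is an edge of $\Lambda(G_2)$, so $\hat{\zeta}(ef):=\zeta(v)$ is a well-defined edge-switching function on $\Lambda(G_2)$, and $\sigma_{\Lambda(G_1)}(e,ef)=\sigma_1(v,e)=\xi(e)\,\sigma_{\Lambda(G_2)}(e,ef)\,\hat{\zeta}(ef)$ identifies $\Lambda(G_1)$ with $\Lambda(G_2)^{(\xi,\hat{\zeta})}$. (Alternatively, (3) is just (2) transported through the dual via Theorem~\ref{LineGraphDualRelationship}, using $\Lambda(G_i)=\llbracket G_i^*\rrbracket_2$ and $G_1^*=(G_2^*)^{(\xi,\zeta)}$ from (1).) The only real obstacle is bookkeeping: keeping straight which member of the switching pair acts on vertices and which on edges after dualizing or sectioning, and confirming that $\hat{\xi}$ and $\hat{\zeta}$ are well defined — the one place where linearity is actually consumed.
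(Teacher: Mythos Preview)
Your proof is correct and follows essentially the same route as the paper: both verify (1) by a direct incidence computation, define $\hat{\xi}(f)=\xi(e)$ for the edge $e$ from which $f$ is derived to establish (2), and derive (3) from (1), (2), and Theorem~\ref{LineGraphDualRelationship} (you additionally supply the direct construction $\hat{\zeta}(ef)=\zeta(v)$, which the paper omits). Your explicit observation that linearity is precisely what makes $\hat{\xi}$ and $\hat{\zeta}$ well defined is a clarification the paper leaves implicit.
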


\begin{proof}
Since $\zeta$ is a vertex-switching function on $G_1$ and $G_2$, by duality, $\zeta$ is an edge-switching function on $G_1^*$ and $G_2^*$.  Similarly $\xi$ becomes a vertex-switching function on $G_1^*$ and $G_2^*$. Now (1) follows from the incidence simplifications:
\begin{align*}
\sigma^{(\xi,\zeta)}_{G_2^*}(e,v)=\xi(e)\sigma_{G_2^*}(e,v)\zeta(v)=\zeta(v)\sigma_{G_2}(v,e)\xi(e)=\sigma^{(\zeta,\xi)}_{G_2}(v,e)&=\sigma_{G_1}(v,e)\\
&=\sigma_{G_1^*}(e,v).
\end{align*}
\smallskip
The strict 2-sections $\llbracket G_1\rrbracket _2$ and $\llbracket G_2\rrbracket _2$ both have the same vertex set as $H$ and therefore $\zeta$ is also a vertex-switching function for both oriented hypergraphs.  Define the edge-switching function $\hat{\xi}\colon E(\llbracket G_2\rrbracket _2)\rightarrow \{+1,-1\}$ as an extension of $\xi$ by the following rule: if $f\in E(\llbracket G_2\rrbracket _2)$ is derived from $e\in E(G_2)$ by definition (see the beginning of Section \ref{IntersectionGraphsSection}), then $\hat{\xi}(f)=\xi(e)$.  Now (2) follows from the following incidence calculation:
\begin{align*}
\sigma^{(\zeta,\hat{\xi})}_{\llbracket G_2\rrbracket _2}(v,f)=\zeta(v)\sigma_{\llbracket G_2\rrbracket _2}(v,f)\hat{\xi}(f)=\zeta(v)\sigma_{\llbracket G_2\rrbracket _2}(v,f)\xi(e)&=\zeta(v)\sigma_{G_2}(v,e)\xi(e)\\
&=\sigma^{(\zeta,\xi)}_{G_2}(v,e)\\
&=\sigma_{G_1}(v,e)\\
&=\sigma_{\llbracket G_1\rrbracket _2}(v,f).
\end{align*}
Finally, the result of (3) follows from (1), (2) and Theorem \ref{LineGraphDualRelationship}.
\end{proof}

\section{Balanced Incomplete Block Designs}\label{BIBDSec}

The matrix relationships found in the above sections produce a connection to balanced incomplete block designs.  There is a similar standard matrix relationship that appears in design theory which can be derived from a specialized oriented hypergraph.  The definitions and design theory results below are taken directly from \cite{MR2246267}.

Suppose $G=(H,+1)$ represents an oriented hypergraph $G$ with underlying hypergraph $H$ and all incidences labeled $+1$.

\begin{theorem}\label{OHBIBD} Suppose $G=(H,+1)$ is a $k$-uniform, $r$-regular oriented hypergraph where any two distinct vertices are adjacent exactly $\lambda$ times.
\[ L(G)=\Eta(G)\Eta(G)^{\text{T}}=(r-\lambda)I+\lambda J.\]
\end{theorem}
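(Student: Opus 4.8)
The plan is to compute the $(i,j)$-entry of $L(G)=\Eta(G)\Eta(G)^{\text{T}}$ directly and show it equals the $(i,j)$-entry of $(r-\lambda)I+\lambda J$. The first equality $L(G)=\Eta(G)\Eta(G)^{\text{T}}$ is just Lemma \ref{OHLapIncidenceRelation}(1), so no work is needed there; everything reduces to identifying the entries of the right-hand product. Write $\Eta(G)=(\eta_{ij})$. Since all incidences are labeled $+1$, we have $\eta_{ij}=1$ if $(v_i,e_j)\in\mathcal{I}$ and $\eta_{ij}=0$ otherwise; in other words $\Eta(G)$ is the ordinary $0$–$1$ incidence matrix of the underlying hypergraph $H$. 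Then the $(i,j)$-entry of $\Eta(G)\Eta(G)^{\text{T}}$ is $\sum_{e\in E}\eta_{ie}\eta_{je}$, which counts the number of edges $e$ incident to \emph{both} $v_i$ and $v_j$.

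Next I would split into the diagonal and off-diagonal cases. For $i=j$, the sum $\sum_{e\in E}\eta_{ie}^2$ counts the edges incident to $v_i$, which is $\deg(v_i)=r$ by $r$-regularity; so every diagonal entry is $r$. For $i\neq j$, the sum counts the edges containing both $v_i$ and $v_j$, i.e. the number of edges through which $v_i$ and $v_j$ are adjacent; by hypothesis this is exactly $\lambda$ for every pair of distinct vertices. Hence $\Eta(G)\Eta(G)^{\text{T}}=rI+\lambda(J-I)=(r-\lambda)I+\lambda J$, which is the claimed identity.

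It is worth also checking consistency with the Laplacian form $L(G)=D(G)-A(G)$: here $D(G)=rI$ by $r$-regularity, and $A(G)=(a_{ij})$ has $a_{ij}=\sum_{e}\sgn_e(v_i,v_j)=\sum_e -\sigma(v_i,e)\sigma(v_j,e)=-\lambda$ for $i\neq j$ (each of the $\lambda$ common edges contributes $-(+1)(+1)=-1$) and $a_{ii}=0$, so $D(G)-A(G)=rI+\lambda(J-I)$, in agreement. The $k$-uniformity hypothesis is not actually needed for this particular matrix identity — it plays its role elsewhere, e.g. via Lemma \ref{OHIncidenceMatrixDualTransposeREL} relating this to $L(G^*)$ — but I would not belabor that point in the proof. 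There is no real obstacle here: the only thing to be careful about is the bookkeeping of the sign $-1$ in the adjacency signature when doing the consistency check, and the observation that $\eta_{ij}^2=\eta_{ij}$ for $0$–$1$ entries so the diagonal count is genuinely the degree.
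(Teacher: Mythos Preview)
Your proof is correct and essentially matches the paper's argument: both compute the entries of $L(G)$ directly, with the paper routing through Lemma~\ref{RRegularMatrixRel} to write $L(G)=rI-A(G)$ and then identifying $A(G)=\lambda I-\lambda J$, which is exactly your ``consistency check.'' Your primary route of reading off the entries of $\Eta(G)\Eta(G)^{\mathrm{T}}$ from the $0$--$1$ incidence matrix is a trivial reordering of the same computation.
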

\begin{proof} Since $G$ is $r$-regular, $L(G) = \Eta(G)\Eta(G)^{\text{T}}=rI-A(G)$, by Lemma \ref{RRegularMatrixRel}.  Since all incidences are labeled $+1$, this forces all adjacency signs to be $-1$ by definition.  Therefore, since each distinct pair of vertices $v_i$ and $v_j$ are adjacent exactly $\lambda$ times, if $i\neq j$, then the $(i,j)$-entry of $A(G)$ is $\sum_{e\in E}\sgn_{e}(v_{i},v_{j}) = -\lambda$.  Otherwise, $a_{ii}=0$.  Hence, the result follows by further simplifying the initial equation: $rI-A(G)=rI-(\lambda I-\lambda J)$.
\end{proof}

A {\it balanced incomplete block design} (BIBD) is a pair $(V,\mathcal{B})$, where $V$ is a $v$-set (i.e., $v=|V|$) and $\mathcal{B}$ is a collection of $b$ $k$-subsets of $V$ (called {\it blocks}) such that each element of $V$ is contained in exactly $r$ blocks, and any 2-subset of $V$ is contained in exactly $\lambda$ blocks.  The numbers $v$, $b$, $r$, $k$ and $\lambda$ are called the {\it parameters} of the BIBD. 

The {\it incidence matrix} of a BIBD $(V,\mathcal{B})$ with parameters $v$, $b$, $r$, $k$ and $\lambda$ is a $v\times b$ matrix $C=(c_{ij})$, where $c_{ij}=1$ when the $i$th element of $V$ occurs in the $j$th block of $\mathcal{B}$, and $c_{ij}=0$, otherwise.

A BIBD $(V,\mathcal{B})$ can be thought of as a $k$-uniform, $r$-regular oriented hypergraph $G=(H,+1)$ with $V(H)=V$, $E(H)=\mathcal{B}$ and for any two distinct vertices $v_i$ and $v_j$ are adjacent exactly $\lambda$ times.  From the Theorem \ref{OHBIBD}, the same result known for BIBD can be established.

\begin{corollary} [\cite{MR2246267},Theorem 1.8]
Suppose $C$ is the incidence matrix of a balanced incomplete block design (BIBD) with parameters $v$, $b$, $r$, $k$, and $\lambda$.  Then
\[CC^{\text{T}}=(r-\lambda)I+\lambda J.\]  
\end{corollary}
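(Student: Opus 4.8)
The plan is to derive this corollary directly from Theorem~\ref{OHBIBD} by observing that the combinatorial hypothesis of the corollary—namely, that $C$ is the incidence matrix of a BIBD with parameters $v,b,r,k,\lambda$—is exactly the translation of the hypotheses of Theorem~\ref{OHBIBD} via the dictionary already laid out in the paragraph preceding the statement. So the first step is to set up that dictionary precisely: given the BIBD $(V,\mathcal{B})$, form the oriented hypergraph $G=(H,+1)$ with $V(H)=V$, $E(H)=\mathcal{B}$, and all incidences labeled $+1$. One then checks that each of the BIBD axioms becomes a hypothesis of Theorem~\ref{OHBIBD}: every block being a $k$-subset says $G$ is $k$-uniform; every element lying in exactly $r$ blocks says $G$ is $r$-regular; and every $2$-subset of $V$ lying in exactly $\lambda$ blocks says any two distinct vertices are adjacent exactly $\lambda$ times (since, with all incidences $+1$, two vertices are adjacent via $e$ precisely when both lie in $e$).

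Second, I would identify the incidence matrix $C$ of the BIBD with the incidence matrix $\Eta(G)$ of the oriented hypergraph. By the definition of $\Eta(G)=(\eta_{ij})$, the entry $\eta_{ij}=\sigma(v_i,e_j)=+1$ exactly when $(v_i,e_j)\in\mathcal{I}$, i.e., when the $i$th element occurs in the $j$th block, and $\eta_{ij}=0$ otherwise; this is verbatim the definition of $c_{ij}$. Hence $C=\Eta(G)$ as $v\times b$ matrices (with $v=n$ and $b=m$ in the hypergraph notation).

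Third, I would simply invoke Theorem~\ref{OHBIBD}: since $G$ satisfies all its hypotheses, $\Eta(G)\Eta(G)^{\text{T}}=(r-\lambda)I+\lambda J$, and substituting $C=\Eta(G)$ gives $CC^{\text{T}}=(r-\lambda)I+\lambda J$, which is the claim. No further computation is needed because all the linear-algebra work has been done inside the proof of Theorem~\ref{OHBIBD} (via Lemma~\ref{RRegularMatrixRel} and the observation that all-$+1$ incidences force all adjacency signs to be $-1$).

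The only genuine content—and the place where one must be a little careful—is the translation in the first step, specifically the equivalence ``any $2$-subset is in exactly $\lambda$ blocks'' $\Longleftrightarrow$ ``any two distinct vertices are adjacent exactly $\lambda$ times.'' This uses both the simplicity assumption on hypergraphs (so each incidence is counted once, hence each block containing both $v_i$ and $v_j$ contributes exactly one adjacency) and the definition of adjacency via a common edge. I would also note in passing that the BIBD being \emph{incomplete} ($k<v$) is not actually needed for the matrix identity; it only guarantees that $G$ is not the trivial all-vertices-in-every-block design, so it can be mentioned but not used. Thus the ``obstacle,'' such as it is, is purely bookkeeping: making sure the parameter correspondence and the matrix identification $C=\Eta(G)$ are stated cleanly so that the conclusion is an immediate specialization of the theorem.
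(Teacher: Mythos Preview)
Your proposal is correct and follows exactly the approach the paper intends: the paper presents this corollary as an immediate specialization of Theorem~\ref{OHBIBD} via the dictionary in the preceding paragraph (BIBD $\leftrightarrow$ $k$-uniform, $r$-regular $G=(H,+1)$ with every pair adjacent $\lambda$ times, and $C=\Eta(G)$), and your write-up spells out precisely that identification.
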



\begin{thebibliography}{10}

\bibitem{MR2791608}
B.~D. Acharya, Signed intersection graphs, J. Discrete Math. Sci. Cryptogr.
  13~(6) (2010) 553--569.

\bibitem{MR1013569}
C.~Berge, Hypergraphs, vol.~45 of North-Holland Mathematical Library,
  North-Holland Publishing Co., Amsterdam, 1989, combinatorics of finite sets,
  Translated from the French.

\bibitem{LineDiHyp}
J.-C. Bermond, F.~Ergincan, M.~Syska, Line directed hypergraphs, in:
  D.~Naccache (ed.), Cryptography and Security: From Theory to Applications,
  vol. 6805 of Lecture Notes in Computer Science, Springer Berlin Heidelberg,
  2012, pp. 25--34.

\bibitem{MR1744046}
A.~Bj{\"o}rner, M.~Las~Vergnas, B.~Sturmfels, N.~White, G.~M. Ziegler, Oriented
  Matroids, vol.~46 of Encyclopedia of Mathematics and its Applications, 2nd
  ed., Cambridge University Press, Cambridge, 1999.

\bibitem{Chen2015442}
Vinciane Chen, Angeline Rao, Lucas~J. Rusnak, and Alex Yang, \emph{A
  characterization of oriented hypergraphic balance via signed weak walks},
  Linear Algebra and its Applications \textbf{485} (2015), 442 -- 453.

\bibitem{MR1235565}
F.~R.~K. Chung, The {L}aplacian of a hypergraph, in: Expanding graphs
  ({P}rinceton, {NJ}, 1992), vol.~10 of DIMACS Ser. Discrete Math. Theoret.
  Comput. Sci., Amer. Math. Soc., Providence, RI, 1993, pp. 21--36.

\bibitem{MR2246267}
C.~J. Colbourn, J.~H. Dinitz (eds.), Handbook of Combinatorial Designs,
  Discrete Mathematics and its Applications (Boca Raton), 2nd ed., Chapman \&
  Hall/CRC, Boca Raton, FL, 2007.

\bibitem{MR2900714}
J.~Cooper, A.~Dutle, Spectra of uniform hypergraphs, Linear Algebra Appl.
  436~(9) (2012) 3268--3292.

\bibitem{MR1405722}
K.~Feng, W.-C.~W. Li, Spectra of hypergraphs and applications, J. Number Theory
  60~(1) (1996) 1--22.

\bibitem{MR1881264}
D.~Ferrero, C.~Padr{\'o}, Connectivity and fault-tolerance of hyperdigraphs,
  Discrete Appl. Math. 117~(1-3) (2002) 15--26.

\bibitem{MR1889911}
D.~Ferrero, C.~Padr{\'o}, Partial line directed hypergraphs, Networks 39~(2)
  (2002) 61--67.

\bibitem{MR1178183}
M.~A. Fiol, A.~S. Llad{\'o}, The partial line digraph technique in the design
  of large interconnection networks, IEEE Trans. Comput. 41~(7) (1992)
  848--857.

\bibitem{MR1325271}
J.~Friedman, A.~Wigderson, On the second eigenvalue of hypergraphs,
  Combinatorica 15~(1) (1995) 43--65.

\bibitem{MR1217096}
G.~Gallo, G.~Longo, S.~Pallottino, S.~Nguyen, Directed hypergraphs and
  applications, Discrete Appl. Math. 42~(2-3) (1993) 177--201, combinatorial
  structures and algorithms.


\bibitem{zbMATH03604927}
C.~{Greene}, {Acyclic orientations.}, {Higher Comb., Proc. NATO Adv. Study
  Inst., Berlin (West) 1976, 65-68 (1977).} (1977).

\bibitem{MR0130839}
F.~Harary, R.~Z. Norman, Some properties of line digraphs, Rend. Circ. Mat.
  Palermo (2) 9 (1960) 161--168.

\bibitem{MR3270186}
P.~Keevash, J.~Lenz, D.~Mubayi, Spectral extremal problems for hypergraphs,
  SIAM J. Discrete Math. 28~(4) (2014) 1838--1854.

\bibitem{MR2842309}
L.~Lu, X.~Peng, High-ordered random walks and generalized {L}aplacians on
  hypergraphs, in: Algorithms and models for the web graph, vol. 6732 of
  Lecture Notes in Comput. Sci., Springer, Heidelberg, 2011, pp. 14--25.

\bibitem{nikiforov}
V.~Nikiforov, An analytic theory of extremal hypergraph problems, preprint:
  \href{http://arxiv.org/pdf/1305.1073v2.pdf}{arXiv:1305.1073}.

\bibitem{MR2900705}
N.~Reff, Spectral properties of complex unit gain graphs, Linear Algebra Appl.
  436~(9) (2012) 3165--3176.

\bibitem{MR3218780}
N.~Reff, Spectral properties of oriented hypergraphs, Electron. J. Linear
  Algebra 27 (2014) 373--391.

\bibitem{ReffRusnak1}
N.~Reff, L.~J. Rusnak, An oriented hypergraphic approach to algebraic graph
  theory, Linear Algebra Appl. 437~(9) (2012) 2262--2270.

\bibitem{MR1890984}
J.~A. Rodr{\'{\i}}guez, On the {L}aplacian eigenvalues and metric parameters of
  hypergraphs, Linear Multilinear Algebra 50~(1) (2002) 1--14.

\bibitem{MR3118956}
L.~J. Rusnak, Oriented hypergraphs: introduction and balance, Electron. J.
  Combin. 20~(3) (2013) Paper 48, 29.

\bibitem{Shi1992533}
C.-J. Shi, A signed hypergraph model of the constrained via minimization
  problem, Microelectronics Journal 23~(7) (1992) 533 -- 542.


\bibitem{MR1666019}
C.-J. Shi, J.~A. Brzozowski, A characterization of signed hypergraphs and its
  applications to {VLSI} via minimization and logic synthesis, Discrete Appl.
  Math. 90~(1-3) (1999) 223--243.

\bibitem{MR1120422}
T.~Zaslavsky, Orientation of signed graphs, European J. Combin. 12~(4) (1991)
  361--375.

\bibitem{MITTOSSG}
T.~Zaslavsky, Matrices in the theory of signed simple graphs, in: Advances in
  discrete mathematics and applications: {M}ysore, 2008, vol.~13 of Ramanujan
  Math. Soc. Lect. Notes Ser., Ramanujan Math. Soc., Mysore, 2010, pp.
  207--229.

\end{thebibliography}

\end{document}